\documentclass[a4paper, 11pt, twoside, openany]{article}
\usepackage{amsmath,amssymb,stmaryrd,mathtools,amsthm}
\usepackage[all]{xy}
\usepackage{fancyhdr}
\usepackage{footmisc}
\usepackage{hyperref}
\usepackage[margin=22mm]{geometry}
\usepackage[UKenglish]{datetime}
\usepackage{graphicx}
\usepackage{wrapfig}
\usepackage[usenames, dvipsnames]{color}
\usepackage{accents}
\usepackage{cases}
\usepackage{enumerate}
\usepackage{tikz-cd}
\usetikzlibrary{trees,positioning}
\usepackage{multirow}
\usepackage[normalem]{ulem}
\usepackage{algorithm}
\usepackage{algorithmic}

\newcommand{\pp}[2]{\frac{\partial #1}{\partial #2}} 
\newcommand{\dd}[2]{\frac{\delta #1}{\delta #2}}

\newcommand{\bss}[1]{\textcolor{purple}{[BS\@: #1]}}

\newcommand{\KL}[1]{\textcolor{orange}{[KL\@: #1]}}

\newtheorem{definition}{Definition}[section]
\newtheorem{proposition}{Proposition}[section]
\theoremstyle{definition}
\newtheorem{remark}{Remark}[section]

\numberwithin{equation}{section}

\newcounter{savefootnote}
\newcounter{symfootnote}
\newcommand{\symfootnote}[1]{%
   \setcounter{savefootnote}{\value{footnote}}%
   \setcounter{footnote}{\value{symfootnote}}%
   \ifnum\value{footnote}>8\setcounter{footnote}{0}\fi%
   \let\oldthefootnote=\thefootnote%
   \renewcommand{\thefootnote}{\fnsymbol{footnote}}%
   \footnote{#1}%
   \let\thefootnote=\oldthefootnote%
   \setcounter{symfootnote}{\value{footnote}}%
   \setcounter{footnote}{\value{savefootnote}}%
}

\begin{document}
\allowdisplaybreaks

%
%
%
\begin{center}
{\Large An Efficient Solver for Finite Element-based Constrained Transport in 3D Magnetohydrodynamics Applied to Magnetic Confinement Fusion}
\end{center}
\vspace{-3mm}
\hrulefill
\begin{center}
{Golo A. Wimmer$^{1}$\symfootnote{correspondence to: gwimmer@lanl.gov}, Konstantin Lipnikov$^1$, Ben S. Southworth$^1$, Xian-Zhu Tang$^{1}$}\\
\vspace{4mm}
\today
\end{center}

\begin{abstract}
We present an efficient solver framework for the stiff magnetic wave coupling arising in resistive magnetohydrodynamics (MHD) on realistic tokamak geometries. The approach builds on an implicit-implicit (IMIM) time-splitting that separates fast magnetic waves and anisotropic heat transport from slower acoustic dynamics while retaining full coupling (Krzysik et al. 2026). Within this formulation, the magnetic wave subsystem appears as an anisotropic curl-curl operator, enabling the use of scalable auxiliary-space Maxwell (AMS) multigrid solvers. To exploit this structure at the discrete level, we employ curl-conforming finite element spaces for the magnetic field and design the velocity space to preserve the curl-curl structure induced by the Lorentz-force coupling. The resulting compatible discretization preserves the discrete magnetic divergence constraint while producing linear systems directly amenable to efficient AMS-based solvers. We demonstrate solver efficiency as well as the accuracy and stability of the resulting structure-preserving discretization on fully nonlinear three-dimensional tokamak test cases.
\end{abstract}
\textit{Keywords.} magnetohydrodynamics, auxiliary-space Maxwell solver, compatible finite elements, implicit-implicit time stepping.
\section{Introduction}
In magnetohydrodynamic simulations for magnetic confinement fusion, the fastest time scales are given by heat flux along magnetic field lines and Alfvén waves \cite{jardin2010computational}. Since the dissipative time scales of interest, such as cross-field heat flux or resistive decay, are typically much slower, most MHD codes use implicit schemes to step over the fast scales in tokamak simulations \cite{bonilla2023fully,chacon2008optimal,doecode_12573,hoelzl2021jorek,sovinec2004nonlinear}. However, this comes with the challenge of developing efficient solvers for the resulting complicated and coupled discrete implicit system of equations. In particular, fast parallel heat flux corresponds to an extremely anisotropic scalar diffusion problem, while the Alfvén wave dynamics can be described as an anisotropic curl-curl vector diffusion problem with a nontrivial null-space.

For the latter problem, existing approaches often exploit the tokamak's torus-type domain, which includes in some sense splitting the spatial discretization into a 2D poloidal -- that is $(r,z)$-component in cylindrical coordinates -- part, and a 1D toroidal one -- that is the angular $\phi$ component. The overall 3D problem is then solved for with this structure in mind, considering coupled 2D problems which are tackled using e.g. direct solvers or Krylov methods with simple smoothers \cite{doecode_12573,holod2021enhanced,sovinec2004nonlinear}. However, the corresponding lack in scalability leads to long simulation times for finer mesh resolutions, and paths towards scalable solvers in production codes is an ongoing research topic (e.g., \cite{adams2025fast,quinlan2023towards}). An alternative approach that does not rely on direct solvers is given in \cite{chacon2008optimal}, where assumptions related to a simplified poloidal plane geometry and corresponding structured mesh are used for a space discretization that allows for an efficient, scalable solver for the Alfvén wave dynamics. The solver relies on physics-based preconditioning, in which a Schur complement in the velocity field is formed, which is then solved for using a special stencil for the Alfvén wave operator. Similarly, a physics-based preconditioning approach is considered in the finite element-based work of \cite{bonilla2023fully,ohm2024scalable}, which further allows for general reactor geometries. However, the solvers used therein rely on costly multigrid schemes that result in prohibitively long runtimes in practice. Other approaches include monolithic multigrid methods \cite{abu2023monolithic,adler2021monolithic,shadid2016scalable}, which, however, do not specifically target stiff Alfvén wave couplings, as is the case in tokamak simulations.

Next to the $H^1$- and $C^1$-finite element discretizations considered in the magnetic confinement fusion-specific works of \cite{bonilla2023fully} and \cite{doecode_12573}, respectively, other finite element-based approaches -- which readily allow for general reactor geometries -- include constrained transport, \cite{bochev2001matching,hu2017stable} (and references therin), in which the magnetic field is discretized in a divergence- or curl-conforming space to ensure a form of a discretely satisfied zero-divergence condition. While efficient solvers for this approach have been developed in the context of incompressible MHD in the more recent works \cite{laakmann2022augmented,li2021constrained}, they also do not address the stiff Alfvén-wave coupling relevant to magnetic confinement fusion. This also holds true for the block preconditioning strategy of \cite{ma2016robust}. The work of \cite{zhang2020uniformly} -- which is based on $H^1$-finite elements -- aims to rectify this gap by adding an anisotropic curl-curl vector diffusion term to the magnetic field equation, which, however, is weakly consistent in time but not in space. Altogether, efficient solvers for stiff Alfvén wave coupling in general tokamak geometries remain a challenge and effectively an open area of research.

In this work, we present an efficient solver strategy based on a curl-conforming finite element discretization for the magnetic field. The discretization is akin to our previous work \cite{wimmer2024structure} and weakly preserves the discrete magnetic divergence constraint and energy structure in space, while enabling flexible meshing of realistic tokamak domains. To address stiffness, we employ an implicit-implicit (IMIM) nonlinearly partitioned time discretization that separates solves for the fast magnetic waves, anisotropic heat flux, and slower sound wave dynamics, while retaining full coupling. Combined with a specialized velocity finite element space tailored to the Lorentz-force term, this formulation reduces the magnetic Schur complement to the aforementioned anisotropic curl-curl problem. This structure naturally admits auxiliary-space Maxwell (AMS) preconditioning \cite{hiptmair2007nodal}, which effectively captures the gradient-field null space of the curl-curl operator and enables scalable performance without resorting to direct solvers or prohibitively expensive standard multigrid strategies.

While the intricacies related to the IMIM temporal discretization are explained further in our accompanying manuscript \cite{krzysik2026imim}, here we focus on the space discretization and solver performance. In numerical tests based on second polynomial order spaces, we confirm accuracy and structural properties for the spatial discretization including the modified velocity space. Further, we demonstrate the efficiency of our solver strategy in realistic scenarios with tokamak domains and Grad-Shafranov-type initial equilibria, including a vertical displacement event and a 3D kink instability.

The remainder of this work is structured as follows: In Section \ref{sec_background}, we review the resistive MHD equations and discuss our parameter regime of interest. Further, we review a modified version of an existing space discretization of ours. Based on this, in Section \ref{sec_novelty}, we will first motivate our novel solver strategy, followed by a detailed description thereof. In Section \ref{sec_Numerical_results}, we present and discuss numerical results. Finally, in Section \ref{sec_conclusion}, we review our results and discuss possible future work.
%
\section{Background} \label{sec_background}
In this section, we describe the resistive MHD equations and parameter regime considered in this paper. We further discuss a structure preserving spatial discretization based on our previous work \cite{wimmer2024structure}, as well as the implicit-implicit split time discretization of \cite{krzysik2026imim}.

\subsection{Equations}
Given a 3D plasma domain $\Omega_p$, we consider a non-dimensionalized form of the 3D resistive MHD equations given by \cite{goedbloed2004principles}
\begingroup
\addtolength{\jot}{2mm}
\begin{subequations} \label{3D_MHD_non_discr}
\begin{align}
&\pp{n}{t} + \nabla \cdot (n \mathbf{V}) = 0, \label{cty_eqn_full}\\
&n \left(\pp{\mathbf{V}}{t} + \mathbf{V} \cdot \nabla \mathbf{V} \right) + \beta \nabla ( n T) + \mathbf{B} \times (\nabla \times \mathbf{B}) = \nabla \cdot (Re^{-1} \nabla \mathbf{V}), \label{momentum_eqn_cts} \\
&\frac{n}{\gamma - 1}\left( \pp{T}{t} + \mathbf{V}\cdot \nabla T \right) +  n T \nabla \cdot \mathbf{V} + \nabla \cdot \mathbf{q} = 0, \label{T_eqn_cts}\\
&\pp{\mathbf{B}}{t} + \nabla \times \mathbf{E} = \mathbf{0}, \hspace{1cm} \mathbf{E} = - \mathbf{V} \times \mathbf{B} + S^{-1}(T) \nabla \times \mathbf{B}, \label{B_eqn_cts}
\end{align}
\end{subequations}
\endgroup
with prognostic fields given by the density number $n$, plasma velocity $\mathbf{V}$, temperature $T$, and magnetic field $\mathbf{B}$. In our numerical results section below, we will consider $\Omega_p$ to be either a periodic unit box, or as part of a larger tokamak domain. In the latter case, the overall tokamak domain $\Omega = \Omega_p \cup\Omega_w\cup\Omega_v$ consists of the plasma region $\Omega_p$, together with surrounding wall and vacuum vessel regions $\Omega_w$, $\Omega_v$, respectively -- see Figures \ref{fig_tokamak}, \ref{fig_VDE_kink} in Section \ref{sec_Numerical_results} below. Within $\Omega_{vw} = \Omega_w \cup \Omega_v$, only the magnetic field is evolved according to the resistivity, that is
\begin{equation}
\mathbf{E}|_{\mathbf{x} \in \Omega_{vw}} = S^{-1}(\mathbf{x}) \nabla \times \mathbf{B}.
\end{equation}
For our tokamak simulations, the boundary conditions for the velocity and temperature field along the 2D plasma-wall interface $\Omega_p \cap \Omega_{vw} = \partial\Omega_p$, as well as the boundary condition for the electric field along the outer reactor wall $\partial \Omega$, are given by
\begin{equation}
    \mathbf{V}|_{\partial\Omega_p} = \mathbf{0}, \;\;\;\; T|_{\partial\Omega_p} = T_b,\;\;\;\; \mathbf{E} \times \mathbf{n}|_{\partial \Omega} = \mathbf{0},
\end{equation}
for a specified small constant wall temperature $T_b > 0$.

The adiabatic index is set to $\gamma = \frac{5}{3}$. In our magnetic confinement fusion application of interest, the non-dimensional plasma beta -- that is the reference ratio of internal to magnetic energies -- is given by $\beta \approx 0.03$. Additionally, we define the heat flux $\mathbf{q}$ in terms of an isotropic component and a purely anisotropic component as
\begin{equation}
    \mathbf{q} = - \big(Pe_\Delta^{-1} \mathbf{b} (\mathbf{b} \cdot \nabla T) + Pe_\perp^{-1} \nabla T\big),
\end{equation}
where $\mathbf{b} = \mathbf{B}/|\mathbf{B}|$ \cite{braginskii1965transport}. Finally, the non-dimensional dissipation parameters $Re$, $Pe_\perp$, $Pe_\parallel$ (with $Pe_\Delta = (Pe_\parallel^{-1} - Pe_\perp^{-1})^{-1}$ \footnote{In practice, for large $Pe_\perp$ and small $Pe_\parallel$, we have $Pe_\Delta \approx Pe_\parallel$.}), and $S$ denote the Reynolds, perpendicular, and parallel thermal Péclet and Lundquist numbers, respectively. Note that in this setup, the unit time scale is given by the Alfvén wave time scale, and the dissipative time scales and their corresponding dimensionless coefficients are considered with respect to this scale. For our tokamak-type tests in the numerical results section \ref{sec_Numerical_results} below, we will consider a range of coefficients
\begin{equation}
Re \in \{10^4, 10^5\}, \;\;\; S \in \{10^4, 10^7\} \; (T/T_b)^{-3/2}, \;\;\; Pe_\parallel \in \{20, 100\}, \;\;\; Pe_\perp \in \{2 \times 10^3, 10^5, 10^7\}. \label{non_dim_coeff}
\end{equation}
The Lundquist number is temperature-dependent according to the Spitzer resistivity model \cite{braginskii1965transport}, while for simplicity, we keep the thermal Péclet numbers constant. For the non-plasma regions in the tokamak test cases, we further set $S = 10$ in $\Omega_w$ and $S=10^5$ in $\Omega_v$ throughout. The temperature and magnetic field are non-dimensionalized with respect to their initial values at the magnetic axis, implying that up to small wave perturbations, we have $T \in [T_b, 1]$. In particular, for near-equilibrium initial conditions with small associated flows $\mathbf{V}$, such that we expect an advection CFL number generally smaller than one, the stiffest time scales in \eqref{3D_MHD_non_discr} relate to the Alfvén waves arising from the velocity-magnetic field coupling via the Lorentz force term $\mathbf{B} \times (\nabla \times \mathbf{B})$, as well as the parallel heat flux, and -- to a smaller extent -- the sound wave coupling from the pressure gradient term $\beta \nabla (nT)$. Finally, the remaining dissipative terms generally relate to significantly slower time scales associated with perpendicular heat flux, viscosity, and resistivity.
\subsection{Space discretization} \label{sec_space_discr}
%
We will consider 3D meshes created as extrusions of 2D meshes in a straight direction for the periodic box domain, as well as in the angular $\phi$-direction for the tokamak domain. Triangular and quadrilateral 2D base meshes then give rise to prism and hexahedral meshes, respectively; here, we focus on hexahedral meshes. This choice is motivated by our anisotropic heat flux discretization, for which we employ a mixed finite element method \cite{gunter2007finite} to avoid excessive spurious perpendicular flux. As shown in \cite{wimmer2024fast}, the accuracy of this discretization may deteriorate significantly on triangular and prism meshes.

The discrete magnetic field $\mathbf{B}_h$ is discretized in the curl-conforming Nédélec space $Nc_k^e$ \cite{arnold2014periodic}, for polynomial degree $k$. Following the mixed finite element method of \cite{gunter2007finite}, we set the discrete temperature $T_h$ in the standard continuous Galerkin (CG) space $Q_{k_T}$ equipped with boundary conditions corresponding to $T_b$. Additionally, we consider an auxiliary variable $\zeta_h$ in the discontinuous Galerkin (DG) space $dQ_{k-1}$, which corresponds to the directional gradient $\mathbf{b}\cdot \nabla T_h$ within the heat flux. Similarly, we set the discrete density $n_h$ in the CG space $Q_k$. For the discrete velocity field $\mathbf{V}_h$, in view of mixed finite element stability, we note that the pressure gradient term in \eqref{momentum_eqn_cts} induces a div-grad type coupling with $T_h \in Q_{k_T}$ and $n_h \in Q_k$. Following finite element exterior calculus (FEEC) \cite{arnold2006finite}, the choice of CG fields $T_h$ and $n_h$ then motivates choosing a compatible velocity space such as $\mathring{Nc}_k^e$, that is $Nc_k^e$ equipped with homogenous boundary conditions, for $\mathbf{V}_h$ \cite{boffi2013mixed}. (Linear) mixed finite element stability for the $\mathbf{V}$-$\mathbf{B}$ coupling related to the Lorentz force term is less clear since the cross-product structure leads to degeneracies and does not fit naturally into a FEEC framework. To our knowledge, no standard LBB-type inf-sup stable mixed finite element discretization for the $\mathbf{V}$-$\mathbf{B}$ coupling has been established. Nevertheless, choosing $\mathbf{V}_h \in \mathring{Nc}_k^e$ given $\mathbf{B}_h \in Nc_k^e$ can still be justified from a stability perspective (albeit not via an inf-sup condition); see Appendix \ref{app_2D_MHD} for a brief discussion. In the solver discussion below, we will motivate a modified velocity function space $\mathbb{V}_V$, with corresponding space $\mathring{\mathbb{V}}_V$ including homogeneous boundary conditions, which next to solver considerations will also be based on these stability related observations. 

\begin{remark}[boundary conditions]
\label{remark_BCs}
Since the curl-conforming space $Nc_k^e$ only contains degrees of freedom associated with the vector field's tangential component along the boundary, zero normal components have to be enforced weakly to obtain the no-slip boundary condition $\mathbf{V}|_{\partial \Omega_p} = \mathbf{0}$. An analogous observation holds true for the modified velocity space $\mathbb{V}_V$ to be introduced below. Further, no-slip velocity boundary conditions imply $n_h = n_h|_{t=0}$ at the boundary. Since we only enforce the velocity boundary conditions weakly, $\mathbf{V}_h$ may not be identically zero at the boundary in practice, which in turn may lead to a small buildup of density at the boundary. To avoid this, we weakly enforce a constant density at the plasma boundary, which will lead to a small inconsistency in mass conservation. Note that in more realistic settings, one could consider the Bohm criterion to determine (non-mass conserving) in- and outflow conditions for the velocity field at the boundary (see e.g., \cite{loizu2012boundary}) instead of the no-slip conditions considered here. Correspondingly, we would then instead weakly enforce a density outflux rather than a constant density value.
\end{remark} 

Finally, to avoid possible spurious oscillations related to advection-dominated dynamics as well as from the $\mathbf{V}$-$\mathbf{B}$ coupling, we need to include numerical stabilization terms. For this purpose, we consider continuous interior penalty stabilization terms \cite{burman2005unified,burman2004edge} for the temperature and density fields, and further, following our previous work \cite{wimmer2024structure}, a penalty on $\nabla \times \mathbf{B}$ for the magnetic field equation. For the velocity field, one could also consider a continuous interior penalty term as done in \cite{laakmann2022augmented}; however, for simplicity we penalize jumps related to $\mathbf{V}_h$ itself, akin to a weakly consistent grid-scale viscosity term, rather than jumps in its gradient $\nabla \mathbf{V}_h$. Overall, the resistive MHD equations \eqref{3D_MHD_non_discr} can then be discretized in space with fields $(n_h, \mathbf{V}_h, \boldsymbol{\omega}_h, P_h, T_h, \zeta_h, \mathbf{B}_h) \in (Q_k, \mathring{\mathbb{V}}_V, Nc_k^e, Q_k, Q_{k_T}, dQ_{k-1}, Nc_k^e)$ such that
\begingroup
\addtolength{\jot}{3mm}
\begin{subequations} \label{3D_MHD_discr}
\begin{align}
&\left\langle \chi, \pp{n_h}{t}\right\rangle - \left\langle \nabla \chi, \mathbf{V}_h n_h \right\rangle + L_{\kappa_n}(\chi, n_h) = 0& \forall \chi \in Q_k(\Omega_p)\\
&\left\langle n_h \mathbf{w}, \pp{\mathbf{V}_h}{t} + \boldsymbol{\omega}_h \times \mathbf{V}_h + \tfrac{1}{2}\nabla P_h + \beta \nabla ( n_h T_h) + \mathbf{B}_h \times (\nabla \times \mathbf{B}_h) \right\rangle \nonumber \\
& \hspace{1cm} + L(\mathbf{w}, \mathbf{V}_h) + L_{\kappa_V}\big(f(\mathbf{w}), f(\mathbf{V}_h)\big)=0 & \forall \mathbf{w} \in \mathring{\mathbb{V}}_V(\Omega_p), \label{V_eqn_discr} \\
& \left\langle \nu, P_h - \tfrac{1}{2}|\mathbf{V}_h|^2 \right\rangle = 0 &\forall \nu \in Q_k(\Omega_p),\\
& \left\langle \boldsymbol{\epsilon}, \boldsymbol{\omega}_h \right\rangle - \left\langle \nabla \times \boldsymbol{\epsilon}, \mathbf{V}_h \right\rangle = 0 &\forall \boldsymbol{\epsilon} \in Nc_k^e(\Omega_p)\\
&\left\langle \frac{n_h \eta}{\gamma - 1}, \pp{T_h}{t} \right\rangle - \left\langle \nabla(\eta  n_h T_h), \mathbf{V}_h \right\rangle \nonumber \\
&\hspace{1cm}+ \left\langle \mathbf{b}_h \cdot \nabla \eta, Pe_\Delta^{-1} \zeta_h \right\rangle + \left\langle \nabla \eta, Pe_\perp^{-1} \nabla T_h \right\rangle + L_{\kappa_T}(\eta, n_h, T_h) = 0 & \forall \eta \in \mathring{Q}_k(\Omega_p), \label{T_eqn_discr}\\
&\langle \phi, \zeta_h - \mathbf{b}_h\cdot \nabla T_h \rangle = 0 &\forall \phi \in dQ_{k-1}(\Omega_p), \\
&\left\langle \mathbf{\Sigma}, \pp{\mathbf{B}_h}{t} \right\rangle_{\!\!\Omega} - \left\langle \nabla \times \mathbf{\Sigma}, \mathbf{V}_h \times \mathbf{B}_h\right\rangle \nonumber \\
&\hspace{1cm}+ \left\langle \nabla \times \mathbf{\Sigma}, S^{-1}(T_h) \nabla \times \mathbf{B}_h\right\rangle_\Omega + L_{\kappa_B}(\mathbf{\Sigma}, \mathbf{B}) = 0&\forall \mathbf{\Sigma} \in Nc_k^e(\Omega), \label{B_eqn_discr}
\end{align}
\end{subequations}
\endgroup
where $\mathring{Q}_k$ in the temperature equation is the discrete CG space equipped with homogeneous boundary conditions. Here $\langle ., .\rangle$ and $\langle ., .\rangle_\Omega$ denote $L^2$ inner products over $\Omega_p$ and $\Omega$, respectively (with $\Omega = \Omega_p$ for our non-tokamak test case). Further, we have introduced two auxiliary variables $\boldsymbol{\omega}_h$, $P_h$ in the discretization of the velocity advection term $\mathbf{V} \cdot \nabla \mathbf{V} = (\nabla \times \mathbf{V}) \times \mathbf{V} + \frac{1}{2}\nabla |\mathbf{V}|^2$, with $\boldsymbol{\omega}_h$ and $P_h$ corresponding to the vorticity $\nabla \times \mathbf{V}$ and $\tfrac{1}{2}|\mathbf{V}|^2$, respectively. Additionally, $L(\mathbf{w}, \mathbf{V}_h)$ denotes a standard non-conforming, symmetric interior penalty based discretization of the viscosity \cite{di2011mathematical}, given by
\begingroup
\addtolength{\jot}{2mm}
\begin{align}
L(\mathbf{w}, \mathbf{V}_h) = &- \langle \nabla \mathbf{V}_h, \nabla \mathbf{w} \rangle + \int_\Gamma \left( \left\{ \nabla \mathbf{V}_h \right\} : [\![ \mathbf{w} ]\!] + \left\{ \nabla \mathbf{w} \right\} : [\![ \mathbf{V}_h ]\!] \right)dS - \int_\Gamma \frac{\kappa}{h_e} [\![ \mathbf{V}_h ]\!] \cdot [\![ \mathbf{w} ]\!] dS \nonumber\\
&\hspace{-5mm} + \int_{\partial \Omega_p} \left(\left( \nabla \mathbf{V}_h \cdot \mathbf{n} \right) \cdot \mathbf{w} + \left( \nabla \mathbf{w} \cdot \mathbf{n} \right) \cdot (\mathbf{V}_h - \mathbf{V}_{bc})\right) ds - \int_{\partial \Omega_p} \frac{\kappa}{h_e} (\mathbf{V}_h - \mathbf{V}_{bc}) \cdot \mathbf{w} \; ds,
\end{align}
\endgroup
where $\Gamma$ denotes the set of all interior facets within $\Omega_p$, $\{\nabla \mathbf{w}\} = (\nabla \mathbf{w}^+ + \nabla \mathbf{w}^-)/2$, $[\![\mathbf{w}]\!] = \mathbf{w}^+ \otimes \mathbf{n}^+ + \mathbf{w}^- \otimes \mathbf{n}^-$, $\kappa > 0$ is a stabilization parameter, and $h_e$ is a mesh facet length scale defined by $\{|K|\}/|F|$, for facets $F$ and their neighboring cells $K$. Further, $\mathbf{V}_{bc}$ denotes a specified boundary value along $\partial\Omega_p$, which in this work is set to zero for test cases including boundaries.

Finally, the penalty terms are given by
\begingroup
\begin{subequations} \label{CIP_terms}
\begin{align}
&L_{\kappa_n}(\chi, n_h) = \int_\Gamma h_e^2 \kappa_n [\![\nabla n_h]\!] \cdot [\![\nabla \chi]\!]dS + \int_{\partial \Omega_p} k_n^{bc} \chi(n - n_{bc}) dx \label{CIP_n}\\ 
&L_{\kappa_V}\big(f(\mathbf{w}), f(\mathbf{V})\big) = \int_\Gamma \kappa_V [\![f(\mathbf{w})]\!]\cdot[\![f(\mathbf{V})]\!]dS, \label{IP_V}\\
&L_{\kappa_T}(\eta, n_h, T_h) = \frac{1}{\gamma - 1}\int_\Gamma n_h h_e^2 \kappa_T [\![\nabla T_h]\!] \cdot [\![\nabla \eta]\!]dS, \\
& L_{\kappa_B}(\mathbf{\Sigma}, \mathbf{B}) = \int_\Gamma h_e^2 \kappa_B [\![\nabla \times \mathbf{\Sigma}]\!] \cdot[\![\nabla \times \mathbf{B}_h]\!] dS,
\end{align}
\end{subequations}
\endgroup
where $\kappa \in \{\kappa_n, \kappa_V, \kappa_T, \kappa_B\}$ are interior penalty stabilization parameters (see \cite{burman2004edge}). Further, as discussed in Remark \ref{remark_BCs}, the density equation stabilization term $L_{\kappa_n}$ also contains a term (with $\kappa_p^{bc} > 0$) weakly enforcing $n_{bc} = n|_{t=0}$ at the plasma boundary\footnote{In the numerical results section below, for implementational reasons related to weakly enforced boundary conditions, the density field is also evolved according to $\langle \chi, \partial_t n_h \rangle = 0$ within the wall-vacuum vessel region $\Omega_{vw}$. For this reason, an additional damping term akin to the last term in \eqref{CIP_n} is included throughout $\Omega_{wv}$.}. Additionally, $f$ in \eqref{IP_V} is a function related to the modified velocity space and will be specified in the solver section below.

Note that equations \eqref{3D_MHD_discr} contain slight deviations from our previous work \cite{wimmer2024structure}, including additional stabilization terms and the mixed heat flux formulation of \cite{gunter2007finite}. These changes relate to improved stability in simulations for tokamak domains. Further, we consider a different velocity space and discretization of the velocity advection terms here; these changes in turn relate to our solver strategy presented below, which relies on a modified velocity function space. In particular, we found the upwind-discretized form of the curl-term $(\nabla \times \mathbf{V}) \times \mathbf{V}$ (based on \cite{natale2016compatible}) to not work well with the modified space. Instead, we consider an auxiliary variable-based setup, which e.g., as shown in the context of the shallow water equations \cite{mcrae2014energy,wimmer2020energy}, ensures a consistently evolving discrete total energy. Similarly to \cite{wimmer2024structure}, statements on structure preservation can still be made for the discrete equations \eqref{3D_MHD_discr}, including the preservation of a weakly defined magnetic field divergence, as well as energetic consistency up to the interior penalty term in the density equation. Further details are provided in Appendix \ref{app_structure}.
%
\section{Solver strategy} \label{sec_novelty}
Having described our space discretization, we next introduce our solver strategy, which is targeted at the stiff magnetic-velocity field coupling resulting from our parameter regime of interest, with internal to magnetic energy ratio $\beta \ll 1$ and slow dissipative time scales of interest \eqref{non_dim_coeff}, up to the parallel heat flux. We will first motivate our strategy in Section \ref{sec_motivation}, discussing the Alfvén wave coupling as an anisotropic curl-curl vector diffusion operator. This is followed by a detailed description of our solver strategy in Section \ref{sec_solver_procedure}, which consists of three main parts. Finally, we will present the fully discrete scheme and describe our block preconditioning procedure in Section \ref{sec_full_scheme}.
\subsection{Motivation} \label{sec_motivation}
Before discretization, and considered in isolation to all remaining terms, a linearized form of the $\mathbf{V}$-$\mathbf{B}$-coupling (within the plasma domain $\Omega_p$) can be written as
\begingroup
\begin{subequations} \label{BV_system}
\begin{align}
    \pp{\mathbf{V}}{t} + \mathbf{B}_0 \times (\nabla \times \mathbf{B}) = \mathbf{0}, \\
    \pp{\mathbf{B}}{t} + \nabla \times (\mathbf{B}_0 \times \mathbf{V}) = \mathbf{0},
\end{align}
\end{subequations}
for some known magnetic field $\mathbf{B}_0$ defined in the same space as $\mathbf{B}$. After discretizing in space using suitably regular spaces $\mathbb{U}_B$ and $\mathbb{U}_V$, and upon eliminating either the magnetic or the velocity field, the mixed system gives rise to two possible equations of the form
\begingroup
\addtolength{\jot}{2mm}
\begin{subequations}
\begin{align}
    &\left\langle \boldsymbol{\sigma}, \pp{^2\mathbf{B}}{t^2} \right\rangle + \left\langle  \mathbf{B}_0 \times (\nabla \times \boldsymbol{\sigma})), \mathbf{B}_0 \times (\nabla \times \mathbf{B})\right\rangle = 0 &\forall \boldsymbol{\sigma} \in \mathbb{U}_B, \label{B_wave_eqn} \\
    &\left\langle \mathbf{v}, \pp{^2\mathbf{V}}{t^2} \right\rangle + \left\langle  \nabla \times (\mathbf{B}_0 \times \mathbf{v}), \nabla \times (\mathbf{B}_0 \times \mathbf{V})\right\rangle = 0 &\forall \mathbf{v} \in \mathbb{U}_V.\label{V_wave_eqn}
\end{align}
\end{subequations}
\endgroup
The magnetic field equation \eqref{B_wave_eqn} contains a symmetric positive semi-definite operator involving cross products applied to curls, while the opposite order holds true for \eqref{V_wave_eqn}. Consequently, the operator in \eqref{B_wave_eqn} contains a null-space of gradient fields, while for \eqref{V_wave_eqn} it is a null-space of cross-products that can be described as gradient fields. It is unclear how to construct solvers that effectively treat the latter type of null-space, and in particular custom-made stencils such as employed in the finite-volume based work of \cite{chacon2008optimal} do not easily carry over to this finite element-based context. With this in mind, we will consider an approach based on eliminating the velocity in favor of a Schur complement corresponding to \eqref{B_wave_eqn}.

We next consider our space discretization \eqref{3D_MHD_discr} for the full MHD equations. Applying a standard implicit scheme such as the midpoint rule in time leads to a fully coupled, discrete system of nonlinear equations, which can be solved for using iterative procedures such as the (quasi-)Newton method. In each iteration, we are then required to solve a linear system of equations. For illustration purposes, if we assume an advection CFL less than 1, skip the auxiliary variables $\boldsymbol{\omega}_h$, $P_h$, and assume $\zeta_h$ has been eliminated, an approximate version of the resulting system takes the form
\begin{equation}
\begin{pmatrix}
  \frac{1}{\alpha\delta t}M^V_{n_0} \!+\! L_{\kappa_V} \!+\! L_{Re} & \beta G_{T_0} & \beta G_{n_0} & C_{B_0}\\
  - G_{n_0}^\top & \frac{1}{\alpha\delta t}M^n \!+\! L_{\kappa_n}  & 0 & 0 \\
  -G_{n_0T_0}^\top & 0 & \!\!\!\!\tfrac{1}{\alpha\delta t(\gamma-1)}M^T_{n_0} \!+\! L_{\kappa_T} \!+\! Q & 0 \\
  -C_{B_0}^\top & 0 & 0 & \!\!\!\!\frac{1}{\alpha\delta t}M^B \!+\! L_{\kappa_B} \!+\! L_S
\end{pmatrix}
\begin{pmatrix}
  \delta \mathbf{V}\\
  \delta n \\
  \delta T \\
  \delta \mathbf{B}
\end{pmatrix}
= R(\mathbf{z}), \label{MHD_system_full}
\end{equation}
for mass matrix blocks $M^x$ with field indicator $x \in (V, n, T, B)$, where a subscript $n_0$ corresponds to mass matrices weighted by a known density field $n_0$ arising in the linearization. Further, $G$ corresponds to a discrete gradient operator, where again subscripts by $n_0$ or a known temperature field $T_0$ denote weights. $C_{B_0}$ corresponds to the discrete directional curl operator $\mathbf{B}_0 \times (\nabla \times (\cdot))$, for a known magnetic field $\mathbf{B}_0$. Additionally, the diagonal entries $L_\kappa$ correspond to penalty stabilization terms, and $L_{Re}$, $Q$, $L_S$ to the viscosity, heat flux (linearized with respect to $\mathbf{B}_0$ and with $\zeta_h$ incorporated), and resistivity. $R$ denotes a residual with known state $\mathbf{z}$ and nonlinear update $\delta \mathbf{z} = (\delta n, \delta \mathbf{V}, \delta T, \delta \mathbf{B})^\top$ (where $^\top$ denotes the transpose). Finally, $\delta t$ denotes the time step, and $\alpha$ a factor associated with the time discretization.


The approximate Jacobian \eqref{MHD_system_full} -- which includes the main stiff terms -- contains a block diagonal structure in $(\delta n, \delta T, \delta \mathbf{B})$. This suggests a block preconditioning strategy with a Schur complement in $\delta \mathbf{V}$, as considered in \cite{chacon2008optimal}, but this is in contrast to our favored approach of eliminating $\mathbf{V}$ to a Schur complement in $\mathbf{B}$ \eqref{B_wave_eqn}. However, Schur complement approximation in $\delta \mathbf{B}$ with respect to the larger system in \eqref{MHD_system_full} is less straightforward due to the sound wave coupling blocks $\beta G_{T_0}$, etc. Even if we ignore the sound wave coupling, as well as velocity dissipation terms $L_{\kappa_V}$, $L_{Re}$, the discrete anisotropic curl-curl operator will be of the block-matrix form $C_{B_0}^\top (M_{n_0}^V)^{-1}C_{B_0}$, which may not be well-approximated by the weakly defined version in \eqref{B_wave_eqn}. We will address each of these concerns in the following section.

\subsection{Approach based on Schur complement in magnetic field} \label{sec_solver_procedure}
%
Following the discussion above, our solver strategy involves three main parts, which we will discuss here:
\begin{itemize}
    \vspace{-2mm}
    \item a suitable split-time discretization to avoid thermodynamical coupling terms in the Schur complement in $\mathbf{B}$,
    \vspace{-2mm}
    \item a suitable velocity space to allow for an adequate appproximation to the Schur complement,
    \vspace{-2mm}
    \item an efficient solver for the approximate Schur complement.
    \vspace{-2mm}
\end{itemize}
For this purpose, we start with the linearized $\mathbf{V}$-$\mathbf{B}$ coupling equations \eqref{BV_system} and work our way up to the full resistive MHD equations, discussing the three concepts in reverse order in this section. In Section \ref{sec_full_scheme}, we then put everything together, including remarks on additional details not covered in this section.

\subsubsection{Preconditioning with Schur complement in $\mathbf{B}$} Using a suitable implicit time discretization, a discrete form of \eqref{BV_system}, whose space discretization corresponds to our discrete resistive MHD setup \eqref{3D_MHD_discr}, can be written as
\begingroup
\begin{subequations} \label{V_B_coupling_discr}
\begin{align}
    &\left\langle \mathbf{w}, \mathbf{V}_h^{n+1} + \alpha\delta t \mathbf{B}_{h,0} \times (\nabla \times \mathbf{B}_h^{n+1})\right\rangle = \left\langle \mathbf{w}, \mathbf{r}_V\right\rangle &\forall \mathbf{w} \in \mathbb{V}_V(\Omega_p), \\
    &\left\langle \mathbf{\Sigma}, \mathbf{B}_h^{n+1} \right\rangle + \alpha\delta t \left\langle \nabla \times \mathbf{\Sigma}, \mathbf{B}_{h,0} \times \mathbf{V}_h^{n+1} \right\rangle = \left\langle\mathbf{\Sigma}, \mathbf{r}_B\right\rangle & \forall \mathbf{\Sigma} \in Nc_k^e(\Omega_p),
\end{align}
\end{subequations}
\endgroup
for a discrete velocity space $\mathbb{V}_V$ to be determined (and ignoring for now the associated boundary conditions), unknown next time level fields $\mathbf{V}_h^{n+1}$, $\mathbf{B}_h^{n+1}$ to be solved for, and where $\mathbf{r}_V \in \mathbb{V}_V(\Omega_p)$, $\mathbf{r}_B \in Nc_k^e(\Omega_p)$ are known right-hand side functions that can be computed based on the known fields $\mathbf{V}_h^n$, $\mathbf{B}_h^n$ of the current time level. Further, $\delta t$ is the time step size, and $\alpha$ is a factor associated with the choice of implicit time discretization. Taking a Schur complement in the magnetic field then corresponds to eliminating the velocity field, which in weak variational form can be written as
\begin{align}
    &\left\langle \mathbf{\Sigma}, \mathbf{B}_h^{n+1} \right\rangle + \alpha^2\delta t^2 \left\langle \mathbf{B}_{h,0} \times (\nabla \times \mathbf{\Sigma}), P_{\mathbb{V}_V}\big(\mathbf{B}_{h,0} \times (\nabla \times \mathbf{B}^{n+1}_h)\big) \right\rangle = R_{S_B}(\mathbf{\Sigma}) & \forall \mathbf{\Sigma} \in Nc_k^e(\Omega_p). \label{Schur_B_project}
\end{align}
Here $P_{\mathbb{V}_V}$ denotes the $L^2$-projection into $\mathbb{V}_V$, and $R_{S_B}$ corresponds to a known right-hand side
\begin{equation}
    R_{S_B}(\mathbf{\Sigma}) = \left\langle\mathbf{\Sigma}, \mathbf{r}_B\right\rangle -\alpha \delta t \left\langle \nabla \times \mathbf{\Sigma}, \mathbf{B}_{h,0} \times \mathbf{r}_V \right\rangle.
\end{equation}
Next, we assume $\mathbb{V}_V$ to be chosen such that the projection of $\mathbf{B}_{h,0} \times (\nabla \times \mathbf{B}_h^{n+1})$ therein approximately corresponds to an embedding for any possible solution $\mathbf{B}_h^{n+1}$, that is,
\begin{align}
    P_{\mathbb{V}_V}(\mathbf{B}_{h,0} \times (\nabla \times \mathbf{B}_h^{n+1})) \approx \mathbf{B}_{h,0} \times (\nabla \times \mathbf{B}_h^{n+1}) &&\forall \mathbf{B}_h^{n+1} \in Nc_k^e(\Omega_p). \label{P_V_approx}
\end{align}
Note that this is an important assumption corresponding to the second of the three solver strategy concepts discussed in this section, and this will be described in more detail further below.

In this case, the left-hand side of \eqref{Schur_B_project}, which can be seen as the bilinear form $s_b(\mathbf{\Sigma}, \mathbf{B}_h^{n+1})$ corresponding to the Schur complement $S_B$ in the magnetic field, can be approximated by $s_B'$ of the form
\begingroup
\addtolength{\jot}{2mm}
\begin{align}
    s_B \approx s_B' = &\left\langle \mathbf{\Sigma}, \mathbf{B}_h^{n+1} \right\rangle + \alpha^2\delta t^2 \left\langle \mathbf{B}_{h,0} \times (\nabla \times \mathbf{\Sigma}), \mathbf{B}_{h,0} \times (\nabla \times \mathbf{B}_h^{n+1}) \right\rangle \nonumber \\
    = &\left\langle \mathbf{\Sigma}, \mathbf{B}_h^{n+1} \right\rangle + \alpha^2\delta t^2 \left\langle \nabla \times \mathbf{\Sigma}, \boldsymbol{\Omega}_{h,0}\nabla \times \mathbf{B}_h^{n+1} \right\rangle & \forall \mathbf{\Sigma} \in Nc_k^e(\Omega_p), \label{Schur_B}
\end{align}
\endgroup
for tensor
\begin{equation}
    \boldsymbol{\Omega}_{h,0} = |\mathbf{B}_{h,0}|^2 I - \mathbf{B}_{h,0}\mathbf{B}_{h,0}^T = |\mathbf{B}_{h,0}|^2(I - \mathbf{b}_{h,0} \mathbf{b}_{h,0}^T), \label{Omega_0}
\end{equation}
where $\mathbf{b}_{h,0}$ is the unit vector in the direction of $\mathbf{B}_{h,0}$. In other words, in line with the transverse nature of Alfvén waves \cite{goedbloed2004principles}, $\boldsymbol{\Omega}_{h,0}$ corresponds to a projection into the perpendicular plane relative to the magnetic field $\mathbf{B}_{h,0}$, scaled by $\mathbf{B}_{h,0}^2$.

Note that for $\boldsymbol{\Omega}_{h,0} \equiv I$, \eqref{Schur_B} corresponds to the standard curl-curl problem in electromagnetism, for curl-conforming field $\mathbf{B}_h^{n+1}$ to be solved for. In particular, Auxiliary-space Maxwell Solvers (AMS) \cite{hiptmair2007nodal}, which exploit ideas from FEEC to explicitly construct the near-nullspace consisting of gradient fields in the CG space $Q_k$, are well-suited for this problem. Although AMS is designed for isotropic curl-curl problems, we show in the numerical results section that AMS remains effective for the anisotropic curl-curl problem \eqref{Schur_B} arising in tokamak-type magnetic field configurations.

\begin{remark}[local projection stabilization] $s_B$ and its approximation $s_B'$ can be shown to relate to each other via
\begin{align}
    s_B' = s_B - \alpha^2 \delta t^2 \left\langle(\iota - P_{\mathbb{V}_V})(\mathbf{B}_{h,0} \times (\nabla \times \mathbf{\Sigma})),(\iota - P_{\mathbb{V}_V})(\mathbf{B}_{h,0} \times (\nabla \times \mathbf{B}_h^{n+1}))\right\rangle,
\end{align}
for any $\mathbf{\Sigma} \in Nc_k^e(\Omega_p)$, and where $\iota$ denotes the identity. In other words, $s_B$ consists of the form $s_B'$ corresponding to a positive-definite operator, together with an additional term that captures local scales that are not resolved by $\mathbb{V}_V$. The term is akin to the local projection stabilization method \cite{braack2006local}, except that here, it acts as antidiffusion rather than diffusion. We note that the additional diffusive term added to the incompressible MHD discretization in \cite{zhang2020uniformly} can be interpreted as a counter to this antidiffusion term, while here, we instead approximately remove it through our choice of velocity space.
\end{remark}

\subsubsection{Modified velocity function space} In our discussion on AMS-based preconditioning above, we crucially assumed the approximation \eqref{P_V_approx}, which in turn relies on the choice of velocity function space $\mathbb{V}_V$. In Section \ref{sec_space_discr}, we argued for $\mathbb{V}_V = Nc_k^e$ in view of mixed finite element stability with respect to the pressure gradient term and, to a lesser extent, the Lorentz force term. In practice, we find that for $\mathbb{V}_V = Nc_k^e$, the approximation in \eqref{P_V_approx} is not good, and consequently \eqref{Schur_B} together with AMS is not an effective preconditioner to the Schur complement equation \eqref{Schur_B_project}. Similarly, this holds true when setting $\mathbb{V}_V$ to the div-conforming Raviart-Thomas-Nédélec space $Nc_k^f$. Note that following FEEC, for $\mathbf{B}_h^{n+1} \in Nc_k^e$, its curl $\nabla \times \mathbf{B}_h^{n+1}$ is a field in the div-conforming space $Nc_k^f$ and therefore the projected term $\mathbf{B}_{h, 0} \times (\nabla \times \mathbf{B}_h^{n+1})$ consists of a cross product of fields in $Nc_k^e$ and $Nc_k^f$.\footnote{In principle, one could also consider discretizations with $\mathbf{B}_{h,0}$ in a different space than $\mathbf{B}_h$ itself, as considered e.g., in \cite{hu2021helicity} in the context of helicity preservation; we could not find an efficient solver strategy ensuring \eqref{P_V_approx} with such an approach.} The cross product can therefore plausibly be seen to be embedded in a higher order vector DG space, and in practice it can be shown (numerically) that \eqref{P_V_approx} holds true -- i.e. $s_B'$ in \eqref{Schur_B} is a good approximation to the Schur complement variational form $s_B$ -- e.g., for $\mathbb{V}_V = [dQ_{2k}]^3$ (or even $[dQ_{k+1}]^3$ for the case $k=2$). However, this leads to velocity spaces with a relatively large number of degrees of freedom, and consequently to an imbalanced ratio of degrees of freedom of $\mathbb{V}_V$ versus the magnetic field space $Nc_k^e$. In practice, this in turn leads to mixed finite element stability problems (cf. \cite{boffi2013mixed,cotter2012mixed}) with corresponding serious spurious numerical modes.

To avoid such stability issues while still satisfying \eqref{P_V_approx}, we further build on the observation that $\mathbf{B}_{h, 0} \times (\nabla \times \mathbf{B}_h^{n+1})$ is a cross product of a known curl-conforming and an unknown div-conforming field. In particular, by design, the space
\begin{equation}
\mathbb{V}_{V,0}'(\Omega_p) \coloneqq \left\{\mathbf{B}_{h,0} \times \boldsymbol{w} : \boldsymbol{w}\in Nc_k^f(\Omega_p)\right\},
\end{equation}
is such that
\begin{equation}
    \mathbf{B}_{h, 0} \times (\nabla \times \mathbf{B}_h^{n+1}) \in \mathbb{V}'_V(\Omega_p). \label{V_space_mod_degenerate}
\end{equation}
However, while $\mathbb{V}_V'$ satisfies \eqref{P_V_approx} exactly and would therefore be ideal for our proposed AMS-based preconditioning approach, it may be a degenerate space in the sense that there may exist $\boldsymbol{w} \in Nc_k^f$ such that $\mathbf{B}_{h,0} \times \boldsymbol{w} = \mathbf{0}$. A simple way to avoid this is given by a shift, leading to the following definition.
\begin{definition}[Modified velocity space]
    Given a discrete, curl-conforming magnetic field $\mathbf{B}_h \in Nc_k^e(\Omega)$, the discrete modified velocity space $\mathbb{V}_V[\mathbf{B}_h](\Omega_p)$ is defined by
    \begin{equation}
    \mathbb{V}_{V}[\mathbf{B}_h](\Omega_p) \coloneqq \left\{\mathbf{B}_h \times \boldsymbol{v} + c_0 \boldsymbol{v}: \boldsymbol{v}\in Nc_k^f(\Omega_p)\right\}, \label{V_space_mod_0}
\end{equation}
for a shift factor $c_0 \neq 0$. Further, we write functions $\mathbf{w}$ therein as
\begin{equation}
    \mathbf{w} = \mathbf{w}(\mathbf{B}_h) \coloneqq \mathbf{B}_h\times\boldsymbol{v} + c_0\boldsymbol{v}\in \mathbb{V}_V[\mathbf{B}_h](\Omega_p) \label{V_space_mod_0_func}
\end{equation}
\end{definition}
%
In practice, we find $c_0 = 1$ to work well, and we will use this value throughout the numerical results section. Note that here we assume non-dimensionalized quantities; if a dimensional magnetic field was used instead, the cross-product would need to be scaled appropriately to ensure correct units for the overall velocity field. Additionally, note that functions in the finite element space $\mathbb{V}_{V}[\mathbf{B}_h](\Omega_p)$ are expressions of the form $\mathbf{B}_h\times\boldsymbol{v} + c_0 \boldsymbol{v}$, rather than a projection thereof into some standard finite element space. Therefore, in the following, any appearances of functions in $\mathbb{V}_{V}[\mathbf{B}_h](\Omega_p)$ can directly be expressed in their expanded form \eqref{V_space_mod_0_func} with a corresponding field $\boldsymbol{v} \in Nc_k^f(\Omega_p)$. Indeed, in practice, when assembling and solving for the discrete MHD equations, we will do so using test and trial functions in $Nc_k^f$. The full velocity field $\mathbf{V}_h$ is then only computed for visualization purposes as a diagnostic via a projection of $\mathbf{B}_h \times \boldsymbol{\mathcal{U}}_h + c_0 \boldsymbol{\mathcal{U}}_h$ into a suitable output space (such as $[dQ_k]^3$), where $\boldsymbol{\mathcal{U}}_h \in Nc_k^f(\Omega_p)$ is the prognostic field that we solve for and keep track of through the simulation.

\begin{remark}[Well-definedness and further discussion on $\mathbb{V}_V$]
As stated before, unlike the space \eqref{V_space_mod_degenerate}, the space \eqref{V_space_mod_0} is no longer degenerate. By using the $3 \times 3$ matrix form of the cross product, one can compute the inverse of $\mathbf{B}_{h,0} \times \boldsymbol{v} + c_0 \boldsymbol{v}$ as
\begin{equation}
    \frac{1}{c_0^2 + |\mathbf{B}_{0,h}|^2}\left(c_0 \boldsymbol{v} - \mathbf{B}_{h,0} \times \boldsymbol{v} + c_0^{-1} \mathbf{B}_{h,0}(\mathbf{B}_{h,0} \cdot \boldsymbol{v})\right). \label{V_mod_inv}
\end{equation}
Further, by definition, $\mathbb{V}_V[\mathbf{B}_h](\Omega_p)$ is of equal size as the div-conforming space $Nc_k^f(\Omega_p)$, and therefore for hexahedral meshes, it is approximately of equal size as the magnetic field's curl-conforming space $Nc_k^e(\Omega_p)$. Therefore, possible instabilities related to a strongly imbalanced ratio of degrees of freedom is less of a concern. Finally, we note that from the definition \eqref{V_space_mod_0} and from \eqref{V_mod_inv}, zero boundary conditions for $\mathbf{V}_h = \boldsymbol{\mathcal{U}}_h + c_0 \boldsymbol{\mathcal{U}}_h \in \mathbb{V}_{V}[\mathbf{B}_h](\Omega_p)$ are equivalent to zero boundary conditions for $\boldsymbol{\mathcal{U}}_h$. Further, for Bohm-type boundary conditions (see Remark \ref{remark_BCs}) of the form $\mathbf{V}_h|_{\partial \Omega_p} \propto \mathbf{B}_h|_{\partial \Omega_p}$, we note that the cross products in \eqref{V_space_mod_0} and \eqref{V_mod_inv} disappear when substituting fields $\mathbf{v}_h \propto \mathbf{B}_h$, and in particular we see that in this case, the boundary conditions for $\mathbf{V}_h$ and $\boldsymbol{\mathcal{U}}_h$ are also equivalent (up to the constant coefficient $c_0$).

\end{remark}

Having defined our modified, non-degenerate velocity space \eqref{V_space_mod_0}, it remains to investigate the implications of the regularizing shift $c_0 \boldsymbol{v}$ on our preconditioning strategy. For the remainder of the velocity space-related discussion based on the linearised $\mathbf{V}$-$\mathbf{B}$ coupling equations \eqref{V_B_coupling_discr}, we consider
\begin{equation}
\mathbb{V}_{V,0}(\Omega) \coloneqq \mathbb{V}_V[\mathbf{B}_{h,0}](\Omega_p).
\end{equation} 
Equations \eqref{V_B_coupling_discr} now read
\begingroup
\addtolength{\jot}{2mm}
\begin{subequations} \label{V_B_eqns_V_mod}
\begin{align}
    &\left\langle \mathbf{w}(\mathbf{B}_{h,0}), \mathbf{V}_h^{n+1}(\mathbf{B}_{h,0}) + \alpha\delta t \mathbf{B}_{h,0} \times (\nabla \times \mathbf{B}_h^{n+1})\right\rangle = \langle \mathbf{w}, \mathbf{r}_V\rangle &\forall \mathbf{w} \in \mathbb{V}_{V,0}(\Omega_p), \label{V_eqn_V_mod}\\
    &\left\langle \mathbf{\Sigma}, \mathbf{B}_h^{n+1} \right\rangle + \alpha\delta t \left\langle \nabla \times \mathbf{\Sigma}, \mathbf{B}_{h,0} \times \mathbf{V}_h^{n+1}(\mathbf{B}_{h,0}) \right\rangle = \langle\mathbf{\Sigma}, \mathbf{r}_B\rangle & \forall \mathbf{\Sigma} \in Nc_k^e(\Omega_p), \label{B_eqn_V_mod}
\end{align}
\end{subequations}
\endgroup
which is still in Galerkin form (rather than Petrov-Galerkin), in that both test and trial functions belong to the same modified velocity space $\mathbb{V}_{V,0}(\Omega_p)$.
\begin{proposition} \label{prop_Schur}
Consider equations \eqref{V_B_eqns_V_mod}, together with a velocity field $\boldsymbol{\mathcal{U}}_h^{n+1} \in Nc_k^f(\Omega_p)$ defined such that
    \begin{equation}
    \mathbf{V}_h^{n+1}(\mathbf{B}_{h,0}) = \mathbf{B}_{h,0} \times \boldsymbol{\mathcal{U}}_h^{n+1} + c_0 \boldsymbol{\mathcal{U}}_h^{n+1}.\label{V_mod_subst}
    \end{equation}
Then equations \eqref{V_B_eqns_V_mod} admit an equation in the magnetic field $\mathbf{B}_h^{n+1}$ whose variational form is given by
\begingroup
\addtolength{\jot}{2mm}
\begin{align}
    &\left\langle \mathbf{\Sigma}, \mathbf{B}_h^{n+1} \right\rangle + \alpha^2\delta t^2 \left\langle \mathbf{B}_{h,0} \times (\nabla \times \mathbf{\Sigma}), \mathbf{B}_{h,0} \times (\nabla \times \mathbf{B}_h^{n+1})\right\rangle \nonumber \\
    &\hspace{5mm}= R_B(\mathbf{\Sigma}) -\alpha \delta t \langle \nabla \times \mathbf{\Sigma}, \mathbf{B}_{h,0} \times \mathbf{r}_V\rangle  + \alpha \delta t c_0 \langle \nabla \times \mathbf{\Sigma}, \boldsymbol{\nu}_{res}\rangle& \forall \mathbf{\Sigma} \in Nc_k^e(\Omega_p), \label{B_Schur_eqn_V_mod}
\end{align}
\endgroup
where $\boldsymbol{\nu}_{res} \in Nc_k^f(\Omega_p)$ is a weak velocity equation residual defined by
\begin{align}
    \langle \mathbf{v}, \mathbf{V}_h^{n+1}(\mathbf{B}_{h,0}) + \alpha\delta t \mathbf{B}_{h,0} \times (\nabla \times \mathbf{B}_h^{n+1}) - \mathbf{r}_V\rangle = \langle \mathbf{v}, \boldsymbol{\nu}_{res}\rangle &&\forall \mathbf{v} \in Nc_k^f(\Omega_p). \label{nu_res}
\end{align}
\end{proposition}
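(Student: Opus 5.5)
The plan is to eliminate the velocity from \eqref{B_eqn_V_mod} by a single, well-chosen test function in the velocity equation \eqref{V_eqn_V_mod}. The natural choice comes from FEEC: for $\mathbf{\Sigma} \in Nc_k^e(\Omega_p)$ we have $\nabla \times \mathbf{\Sigma} \in Nc_k^f(\Omega_p)$. Hence $\mathbf{w} = \mathbf{B}_{h,0} \times (\nabla\times\mathbf{\Sigma}) + c_0 \nabla \times \mathbf{\Sigma}$ is an admissible test function in $\mathbb{V}_{V,0}(\Omega_p)$. This is exactly where the modified space pays off: the test function needed to reproduce the Lorentz-force coupling is available, up to the shift term $c_0 \nabla\times\mathbf{\Sigma}$. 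We take $R_B(\mathbf{\Sigma}) = \langle \mathbf{\Sigma}, \mathbf{r}_B\rangle$.

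\textbf{Step 1: the velocity residual.} Abbreviate the pointwise velocity residual as
\begin{equation*}
\mathbf{R} \coloneqq \mathbf{V}_h^{n+1}(\mathbf{B}_{h,0}) + \alpha\delta t\, \mathbf{B}_{h,0}\times(\nabla\times\mathbf{B}_h^{n+1}) - \mathbf{r}_V .
\end{equation*}
By \eqref{nu_res}, $\boldsymbol{\nu}_{res}$ is simply the $L^2$-projection of $\mathbf{R}$ into $Nc_k^f(\Omega_p)$, so it is well defined. In this notation, \eqref{V_eqn_V_mod} reads $\langle \mathbf{B}_{h,0}\times\mathbf{v} + c_0\mathbf{v}, \mathbf{R}\rangle = 0$ for all $\mathbf{v}\in Nc_k^f$. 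Choosing $\mathbf{v} = \nabla\times\mathbf{\Sigma}$ and using the definition of $\boldsymbol{\nu}_{res}$ for the shift part gives
\begin{equation*}
\langle \mathbf{B}_{h,0}\times(\nabla\times\mathbf{\Sigma}), \mathbf{R}\rangle = -c_0 \langle \nabla\times\mathbf{\Sigma}, \mathbf{R}\rangle = -c_0\langle \nabla\times\mathbf{\Sigma}, \boldsymbol{\nu}_{res}\rangle .
\end{equation*}

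\textbf{Step 2: rewrite the coupling term in \eqref{B_eqn_V_mod}.} Use the pointwise scalar triple product identity $(\nabla\times\mathbf{\Sigma})\cdot(\mathbf{B}_{h,0}\times\mathbf{V}) = -(\mathbf{B}_{h,0}\times(\nabla\times\mathbf{\Sigma}))\cdot\mathbf{V}$. This turns $\alpha\delta t\langle\nabla\times\mathbf{\Sigma}, \mathbf{B}_{h,0}\times\mathbf{V}_h^{n+1}\rangle$ into $-\alpha\delta t\langle \mathbf{B}_{h,0}\times(\nabla\times\mathbf{\Sigma}), \mathbf{V}_h^{n+1}\rangle$. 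Now substitute $\mathbf{V}_h^{n+1} = \mathbf{R} - \alpha\delta t\,\mathbf{B}_{h,0}\times(\nabla\times\mathbf{B}_h^{n+1}) + \mathbf{r}_V$ and evaluate the three resulting pieces:
\begin{itemize}
\item the second piece yields the anisotropic curl-curl term $\alpha^2\delta t^2\langle \mathbf{B}_{h,0}\times(\nabla\times\mathbf{\Sigma}), \mathbf{B}_{h,0}\times(\nabla\times\mathbf{B}_h^{n+1})\rangle$;
\item the $\mathbf{r}_V$ piece becomes $\alpha\delta t\langle\nabla\times\mathbf{\Sigma}, \mathbf{B}_{h,0}\times \mathbf{r}_V\rangle$ after applying the triple product identity again;
\item the $\mathbf{R}$ piece is replaced, via Step 1, by $\alpha\delta t\, c_0\langle\nabla\times\mathbf{\Sigma},\boldsymbol{\nu}_{res}\rangle$.
\end{itemize}
Moving the known terms to the right-hand side gives \eqref{B_Schur_eqn_V_mod}. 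No assumption on $\mathbf{V}_h^{n+1}$ beyond the substitution \eqref{V_mod_subst} is needed; that substitution only guarantees that the velocity is in the trial space, so that \eqref{V_eqn_V_mod} is a genuine Galerkin statement.

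\textbf{Main obstacle.} The algebra itself is routine. The only delicate points are bookkeeping ones. First, we must justify that $\nabla\times\mathbf{\Sigma}$ is a legal choice of $\mathbf{v}$, including under the boundary conditions on $Nc_k^f$; this is the FEEC inclusion $\nabla\times Nc_k^e\subset Nc_k^f$, together with the fact that the boundary conditions are imposed weakly. Second, the orientation of the cross products must be tracked carefully. The sign in front of the $\boldsymbol{\nu}_{res}$ term depends on the ordering convention in the triple product and in \eqref{V_space_mod_0_func}, so I would recheck it against the conventions used there. The structurally important conclusion is unaffected either way: the Schur complement is exactly the form $s_B'$ in \eqref{Schur_B}. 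The shift $c_0$ only contributes a right-hand side proportional to the velocity residual, which vanishes when the velocity equation is solved exactly in $Nc_k^f$.
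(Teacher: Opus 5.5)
Your argument is essentially the paper's. Both proofs test \eqref{V_eqn_V_mod} with $\mathbf{B}_{h,0}\times(\nabla\times\mathbf{\Sigma}) + c_0\nabla\times\mathbf{\Sigma}$, which is admissible because $\nabla\times Nc_k^e\subset Nc_k^f$. Both then use the triple-product identity \eqref{cross_shuffle} and collect the leftover $c_0$-terms into $\boldsymbol{\nu}_{res}$. Your residual $\mathbf{R}$ is only a bookkeeping substitute for the paper's adding and subtracting of $\alpha\delta t\langle c_0\nabla\times\mathbf{\Sigma}, \mathbf{V}_h^{n+1}(\mathbf{B}_{h,0})\rangle$.

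\textbf{The sign.} The sign you flag is not convention-dependent, and your own Step 2 already fixes it. The $\mathbf{R}$-piece contributes $+\alpha\delta t\, c_0\langle\nabla\times\mathbf{\Sigma},\boldsymbol{\nu}_{res}\rangle$ on the left-hand side, hence $-\alpha\delta t\, c_0\langle\nabla\times\mathbf{\Sigma},\boldsymbol{\nu}_{res}\rangle$ on the right. This is the same flip you correctly apply to the $\mathbf{r}_V$-term. The paper's own last display, just before ``follows directly'', yields exactly the same minus sign. So the $+$ in the displayed statement is a sign slip. It is harmless, since this term is discarded when building the preconditioner. You should state the identity with the minus sign rather than assert that your computation reproduces \eqref{B_Schur_eqn_V_mod} verbatim.

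\textbf{Admissibility of the test function.} Admissibility of $\nabla\times\mathbf{\Sigma}$ rests solely on the FEEC inclusion, because boundary conditions are explicitly set aside in \eqref{V_B_coupling_discr}. Weak imposition of boundary conditions is not what makes this test function legal. In $\mathring{Nc}_k^f$, with its strongly imposed zero normal trace, $\nabla\times\mathbf{\Sigma}$ would in general not be an admissible test function.
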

\begin{proof}
Note that in the magnetic field equation \eqref{B_eqn_V_mod}, we can rewrite
\begin{equation}
    \left\langle \nabla \times \mathbf{\Sigma}, \mathbf{B}_{h,0} \times \mathbf{V}_h^{n+1}(\mathbf{B}_{h,0}) \right\rangle = - \left\langle \mathbf{B}_{h,0} \times (\nabla \times \mathbf{\Sigma}), \mathbf{V}_h^{n+1}(\mathbf{B}_{h,0})\right\rangle. \label{cross_shuffle}
\end{equation}
We then add and subtract
\begin{equation*}
    \alpha \delta t \langle c_0 \nabla \times \mathbf{\Sigma}, \mathbf{V}_h^{n+1}(\mathbf{B}_{h,0}) \rangle,
\end{equation*}
to \eqref{B_eqn_V_mod}, which together with \eqref{cross_shuffle} results in
\begingroup
\addtolength{\jot}{2mm}
\begin{align}
    &\left\langle \mathbf{\Sigma}, \mathbf{B}_h^{n+1} \right\rangle - \alpha\delta t \left\langle \boldsymbol{\sigma}(\mathbf{B}_{h,0}), \mathbf{V}_h^{n+1}(\mathbf{B}_{h,0}) \right\rangle \nonumber \\
    &\hspace{5mm}+ \alpha \delta t \langle c_0 \nabla \times \mathbf{\Sigma}, \mathbf{V}_h^{n+1}(\mathbf{B}_{h,0}) \rangle = \langle \mathbf{\Sigma}, \mathbf{r}_B\rangle & \forall \mathbf{\Sigma} \in Nc_k^e(\Omega_p), \label{B_eqn_proof}
\end{align}
\endgroup
where
\begin{equation*}
    \boldsymbol{\sigma}(\mathbf{B}_{h,0}) = \mathbf{B}_{h,0} \times (\nabla \times \mathbf{\Sigma}) + c_0 \nabla \times \mathbf{\Sigma} \in \mathbb{V}_{V,0},
\end{equation*}
which is a function of $\mathbb{V}_{V,0}$ since by FEEC, the curl maps from $Nc_k^e$ to $Nc_k^f$. Accordingly, we can set $\mathbf{w}(\mathbf{B}_{h,0}) = \boldsymbol{\sigma}(\mathbf{B}_{h,0})$ in the velocity equation \eqref{V_eqn_V_mod}, which implies
\begin{equation}
    \left\langle \boldsymbol{\sigma}(\mathbf{B}_{h,0}), \mathbf{V}_h^{n+1}(\mathbf{B}_{h,0}) \right\rangle = - \alpha\delta t \left\langle \boldsymbol{\sigma}(\mathbf{B}_{h,0}), \mathbf{B}_{h,0} \times (\nabla \times \mathbf{B}_h^{n+1})\right\rangle + \langle \boldsymbol{\sigma}(\mathbf{B}_{h,0}), \mathbf{r}_V\rangle. \label{V_subst_proof}
\end{equation}
Finally, substituting into \eqref{B_eqn_proof}, we obtain
\begingroup
\addtolength{\jot}{2mm}
\begin{align}
    &\left\langle \mathbf{\Sigma}, \mathbf{B}_h^{n+1} \right\rangle - \alpha\delta t \left(-\alpha\delta t \left\langle \boldsymbol{\sigma}(\mathbf{B}_{h,0}), \mathbf{B}_{h,0} \times (\nabla \times \mathbf{B}_h^{n+1})\right\rangle + \langle \boldsymbol{\sigma}(\mathbf{B}_{h,0}), \mathbf{r}_V\rangle\right) \nonumber \\
    &\hspace{5mm}+ \alpha \delta t \langle c_0 \nabla \times \mathbf{\Sigma}, \mathbf{V}_h^{n+1}(\mathbf{B}_{h,0}) \rangle = R_B(\mathbf{\Sigma}) & \forall \mathbf{\Sigma} \in Nc_k^e(\Omega_p), \label{B_eqn_proof_subst}
\end{align}
\endgroup
and by expanding the expressions $\boldsymbol{\sigma}(\mathbf{B}_{h,0})$ as well as moving terms appropriately, we arrive at
\begingroup
\addtolength{\jot}{2mm}
\begin{align*}
    &\left\langle \mathbf{\Sigma}, \mathbf{B}_h^{n+1} \right\rangle + \alpha^2\delta t^2 \left\langle \mathbf{B}_{h,0} \times (\nabla \times \mathbf{\Sigma}), \mathbf{B}_{h,0} \times (\nabla \times \mathbf{B}_h^{n+1})\right\rangle \nonumber \\
    &\hspace{3mm}+ \alpha \delta t c_0 \left(\langle \nabla \!\times\! \mathbf{\Sigma}, \mathbf{V}_h^{n+1}(\mathbf{B}_{h,0}) \rangle \!+\! \alpha\delta t \left\langle \nabla \times \mathbf{\Sigma}, \mathbf{B}_{h,0} \!\times\! (\nabla \!\times\! \mathbf{B}_h^{n+1})\right\rangle \!-\! \langle \nabla \!\times\! \mathbf{\Sigma}, \mathbf{r}_V\rangle\right) \nonumber \\
    &\hspace{5mm}= R_B(\mathbf{\Sigma}) -\alpha \delta t \langle \nabla \times \mathbf{\Sigma}, \mathbf{B}_{h,0} \times \mathbf{r}_V\rangle & \forall \mathbf{\Sigma} \in Nc_k^e(\Omega_p).
\end{align*}
\endgroup
Equation \eqref{B_Schur_eqn_V_mod} then follows directly from defining $\boldsymbol{\nu}_{res}$ as in \eqref{nu_res}, noting again that for any $\mathbf{\Sigma} \in Nc_k^e(\Omega_p)$, we have $\nabla \times \mathbf{\Sigma} \in Nc_k^f(\Omega_p)$.
\end{proof}
Equation \eqref{B_Schur_eqn_V_mod} is identical to the magnetic field equation \eqref{Schur_B_project} we considered in our Schur complement preconditioning discussion, except that, as targeted through our modified velocity space, the velocity projection no longer appears. Instead, due to the regularizing shift with factor $c_0$, we have an additional contribution related to a curl applied weakly to $\alpha \delta t c_0 \boldsymbol{\nu}_{res}$. $\boldsymbol{\nu}_{res}$ in turn corresponds to the residual of the velocity equation with test functions $\mathbf{w} = c_0\mathbf{v} \in Nc_k^f(\Omega)$ instead of $\mathbf{w} = \mathbf{B}_{h,0}\times \mathbf{v} + c_0 \mathbf{v} \in \mathbb{V}_{V,0}(\Omega)$. In particular, the term is strongly consistent in the sense that before discretization (in space and time), the strong velocity residual evaluates to zero. In practice, when preconditioning the Schur complement in $\mathbf{B}$ in our full set of MHD equations, we disregard the $\boldsymbol{\nu}_{res}$ term and use the left-hand side of \eqref{B_Schur_eqn_V_mod} to construct an approximate Schur complement.

\begin{remark}(Density factor)
 Finally, we note that for the full resistive MHD equations \eqref{3D_MHD_non_discr}, the velocity field's time derivative contains a factor of $n_0$ (after linearization), for some known density field $n_0$. Accordingly, in this case, the corresponding weak equation in $\mathbf{B}$ \eqref{B_wave_eqn} contains a factor of $1/n_0$ in one of the cross-products. For non-constant $n_0$, the proof of Proposition \ref{prop_Schur} -- which constructs a discrete version of \eqref{B_wave_eqn} -- no longer holds true, since in this case, the left-hand side term of \eqref{V_subst_proof} reads $\left\langle n_0 \boldsymbol{\sigma}(\mathbf{B}_{h,0}), \mathbf{V}_h^{n+1}(\mathbf{B}_{h,0}) \right\rangle$ and can no longer be directly substituted into equation \eqref{B_eqn_proof}. The substitution would still work if we considered a discrete velocity equation based on a strong one \eqref{momentum_eqn_cts} that has been divided by $n$; however, in this case, the discretization can be shown to contain an additional energetic inconsistency (in that the velocity equation's substitution in \eqref{H_subst} in Appendix \ref{app_structure} below no longer holds true). In practice, however, we find that an approximate Schur complement based on the left-hand side of \eqref{B_Schur_eqn_V_mod} and containing a non-constant factor of $n_0^{-1}$ still remains a good representation of the true Schur complement.
\end{remark}

\subsubsection{Split time discretization}

Above we developed a solver for the isolated $\mathbf{V}$-$\mathbf{B}$ system within the plasma domain $\Omega_p$; next we need to incorporate this with full resistive MHD dynamics. However, a fully implicit coupling makes for a much less clear Schur complement and associated approximation than in the isolated $\mathbf{V}$-$\mathbf{B}$ system. Specifically, the $\{\delta \mathbf{V},\delta n,\delta T\}$ $3 \times 3$ block subsystem that we eliminate has non-trivial off-diagonal structure including sound wave coupling terms that relate the velocity to the density and temperature fields; even when $\beta$ is small, this coupling cannot be ignored for adequately fine meshes in tokamak domains.

To circumvent this, we use an implicit-implicit partitioned integration approach for resistive MHD developed in our companion paper \cite{krzysik2026imim}. For completeness, we briefly review the method here as well. Due to the modified velocity space and nonlinear interaction of different variables, we need to use a \emph{nonlinear partitioning} as proposed in \cite{buvoli2025new,tran2025order}. Specifically, we map the right-hand side of an ODE $\widehat{F}(y)$ to a function of two arguments:
\begin{equation}
\pp{y}{t} = \widehat{F}(y) \coloneqq F\left(y^{[1]}, y^{[2]}\right), \label{IMIM_scheme}
\end{equation}
were $F\left(y,y\right) = \widehat{F}(y)$ when $y = y^{[1]}=y^{[2]}$. In this context, we use a second order implicit-implicit (IMIM) nonlinearly partitioned Runge--Kutta method  \cite{buvoli2025new} of the form
\begin{equation}
\begin{aligned}
&Y^{(0)} = y^n, &\hspace{15mm} Y^{(2)} = y^n + \frac{\delta t}{2} F(Y^{(1)}, Y^{(2)}), \\ 
&Y^{(1)} = y^n + \frac{\delta t}{2} F(Y^{(1)}, Y^{(0)}), &\hspace{15mm} y^{n+1} = y^n + \delta t F(Y^{(1)}, Y^{(2)}),
\end{aligned}
\label{ynp1}
\end{equation}
for time step $\delta t$, stage vectors $Y^{(0)}$, $Y^{(1)}$ and $Y^{(2)}$,  and current as well as next time level states $y^n$, $y^{n+1}$, respectively. Note that $y^{n+1}$ can be expressed through $y^n$ and $Y^{(2)}$ by substituting for $F(Y^{(1)}, Y^{(2)})$, so that in practice, this scheme involves the computation of two implicit stages to obtain $Y^{(1)}$ and $Y^{(2)}$. In view of our linearized system \eqref{MHD_system_full}, we then split the MHD equations \eqref{3D_MHD_non_discr} according to arguments in $F(y^{[1]}, y^{[2]})$ such that
\begin{align}
\begin{split}
& y^{[1]}\colon \text{Sound wave coupling, viscosity, interior penalty terms,}\\
& y^{[2]}\colon \text{Magnetic wave coupling, heat flux.}\label{IMIM_split}
\end{split}
\end{align}
Detailed derivations and numerical experiments regarding the IMIM partitioning, implementation, and interaction with the modified velocity space can be found in our companion paper \cite{krzysik2026imim}. For our purposes, following the split, the $4 \times 4$ block system \eqref{MHD_system_full} decouples into two implicit systems, with the first one given by
\begin{equation}
\begin{pmatrix}
  \frac{1}{\alpha\delta t}M^V_{n_0} \!+\! L_{\kappa_V} \!+\! L_{Re} & \beta G_{T_0} & \beta G_{n_0} & 0\\
  - G_{n_0}^T & \frac{1}{\alpha\delta t}M^n \!+\! L_{\kappa_n}  & 0 & 0 \\
  -G_{n_0T_0}^T & 0 & \tfrac{1}{\alpha\delta t(\gamma-1)}M^T_{n_0} \!+\! L_{\kappa_T} & 0 \\
  0 & 0 & 0 & \frac{1}{\alpha\delta t}M^B \!+\! L_{\kappa_B}
\end{pmatrix}
\!\!
\begin{pmatrix}
  \delta \mathbf{V}^{(1)}\\
  \delta n^{(1)} \\
  \delta T^{(1)} \\
  \delta \mathbf{B}^{(1)}
\end{pmatrix}
= R^{(0)}, \label{MHD_system_sound}
\end{equation}
for thermohydrodynamic terms and magnetic field stabilization. The updates $\delta x^{(1)}$, for $x\in \{\mathbf{V}, n, T, \mathbf{B}\}$, correspond to the stage vector $Y^{(1)}$, and the right-hand side residual $R^{(0)}$ is a function of $y^n$ and the known stage vector $Y^{(0)}$. Further, for \eqref{ynp1}, the time stepping coefficient is given by $\alpha = 1/2$. The second system is given by
\begin{equation}
\begin{pmatrix}
  \frac{1}{\alpha\delta t}M^V_{n_0} & 0 & 0 & C_{B_0} \\
  0 & \frac{1}{\alpha\delta t}M^n & 0 & 0 \\
  0 & 0 & \tfrac{1}{\alpha\delta t(\gamma-1)}M^T_{n_0} + Q & 0 \\
  -C_{B_0}^T & 0 & 0 & \frac{1}{\alpha\delta t}M^B + L_S
\end{pmatrix}
\!\!
\begin{pmatrix}
  \delta \mathbf{V}^{(2)}\\
  \delta n^{(2)} \\
  \delta T^{(2)} \\
  \delta \mathbf{B}^{(2)}
\end{pmatrix}
= R^{(1)}, \label{MHD_system_Alfven}
\end{equation}
containing primarily the stiff, fast-scale magnetic wave and heat flux terms. The updates and residual are defined analogously to the first system. We then observe that in \eqref{MHD_system_sound}, the magnetic field stabilization decouples and can be solved for separately, followed by a solve for the $3\times3$ thermodynamic system without the heat flux.

Finally, in \eqref{MHD_system_Alfven}, the heat flux problem in $\delta T$ and the magnetic wave coupling in $\delta \mathbf{V}$-$\delta \mathbf{B}$ decouple and can be solved for separately, while the problem for $\delta n$ becomes trivial. Eliminating $\{\delta n,\delta  T,\delta \mathbf{V}\}$ yields a simple Schur complement in $\delta\mathbf{B}$ of the form
\begin{equation}
    \tfrac{1}{\alpha \delta t}M_B + \alpha \delta tC_{B_0}^T (M_{n_0}^V)^{-1} C_{B_0} + L_S.
\end{equation}
Following the previous discussion in this section, with the velocity mass matrix inverse corresponding to the velocity space projection, and scaling through by a constant $\alpha \delta t$, this operator can then be preconditioned based on the weak variational form
\begingroup
\addtolength{\jot}{2mm}
\begin{align}
    s_{B,n_0}' = &\left\langle \mathbf{\Sigma}, \delta \mathbf{B} \right\rangle_{\!\Omega} + \alpha^2\delta t^2 \left\langle \nabla \times \mathbf{\Sigma}, \tfrac{1}{n_0}\boldsymbol{\Omega}_{h,0}\nabla \times \delta \mathbf{B} \right\rangle + \alpha \delta t \left\langle \nabla \times \mathbf{\Sigma}, S^{-1}\nabla \times \delta \mathbf{B} \right\rangle_{\!\Omega} & \forall \mathbf{\Sigma} \in Nc_k^e(\Omega), \label{Schur_B_weak}
\end{align}
\endgroup
together with AMS. Note that since the full system includes resistivity, which is also evolved in the vacuum vessel-wall region $\Omega_{vw}$, we have included inner products $\langle.,.\rangle_{\Omega}$ here.

\begin{remark}[Schur complement in fully discrete MHD scheme] \label{remark_Schur_nonl_MHD_IMIM}
In Section \ref{V_B_coupling_discr}, we derived a Schur-complement based preconditioning strategy for a discretization of the linearized magnetic wave equations \eqref{BV_system}. For the fuly discretized, nonlinear MHD equations, additional terms appear from the linearization (within an iterative Newton method) as well as from the IMIM time discretization. The former terms generally do not contribute as much to the discrete linear system's overall stiffness, since they either do not involve differentials of unknowns or are multiplied by very small quantities. Similarly, for the latter terms, it can be shown that either the same holds true, or they appear as known right-hand side terms.
\end{remark}

%
%
%
%
%
\subsection{Full MHD scheme} \label{sec_full_scheme}
Having discussed the solver strategy targeted at the stiff magnetic wave coupling, we next present the process for the full MHD scheme, based on the spatial discretization \eqref{3D_MHD_discr} together with the velocity space $\mathbb{V}_V(\Omega_p) = \mathbb{V}_V[\mathbf{B}_h](\Omega_p)$ from \eqref{V_space_mod_0}, as well as the IMIM split-time discretization \eqref{ynp1} with our split choice \eqref{IMIM_split}. For further details on IMIM discretizations, including on the time derivative applied to $\mathbf{V}_h = \mathbf{B}_h\times\boldsymbol{\mathcal{U}}_h + c_0 \boldsymbol{\mathcal{U}}_h$ in the momentum equation, see \cite{krzysik2026imim}.

As discussed, the fully discretized equations include two implicit stages. Given the current prognostic time level fields $(n_h^n, \boldsymbol{\mathcal{U}}_h^n, T_h^n, \mathbf{B}_h^n)$, the first stage solves for prognostic fields $(n_h^{(1)}, \boldsymbol{\mathcal{U}}_h^{(1)}, T_h^{(1)}, \mathbf{B}_h^{(1)})$ implicitly with respect to sound wave, viscosity, and interior penalty terms such that
\begingroup
\allowdisplaybreaks
\addtolength{\jot}{3mm}
\begin{subequations} \label{3D_MHD_discr_stage_I}
\begin{align}
&\left\langle \chi, \tfrac{2}{\delta t}\big(n_h^{(1)} - n_h^n\big)\right\rangle - \left\langle \nabla \chi, \mathbf{V}_h^{(1)} n_h^{(1)} \right\rangle + L_{\kappa_n}\big(\chi, n_h^{(1)}\big) = 0& \forall \chi \in Q_k(\Omega_p),\\
&\left\langle n_h^n \mathbf{w}\big(\mathbf{B}^{(1)}_h, \mathbf{v}\big), \tfrac{2}{\delta t}\big(\mathbf{V}^{(1)}_h \!-\! \mathbf{V}_h^n\big) +\boldsymbol{\omega}_h\big(\mathbf{V}_h^{(1)}\big) \!\times\! \mathbf{V}_h^{(1)} \!+\! \tfrac{1}{2}\nabla P_h\big(\mathbf{V}_h^{(1)}\big) \right\rangle \nonumber \\
&\hspace{2mm}+ \left\langle \mathbf{w}(\mathbf{B}_h^{(1)}, \mathbf{v}), \beta \nabla \big(n_h^{(1)} T_h^{(1)}\big) + \mathbf{B}_h^n \times \big(\nabla \times \mathbf{B}_h^n\big) \right\rangle \nonumber \\
& \hspace{2mm} + L\big(\mathbf{w}\big(\mathbf{B}_h^{(1)}, \mathbf{v}\big), \mathbf{V}_h^{(1)}\big) + L_{\kappa_V}\big(\mathbf{v}, \boldsymbol{\mathcal{U}}_h^{(1)}\big) =0 & \forall \mathbf{v} \in \mathring{Nc}_k^f(\Omega_p), \label{V_eqn_discr_stage_I} \\
&\left\langle \frac{n_h^n \eta}{\gamma - 1}, \tfrac{2}{\delta t}\big(T^{(1)}_h - T^n_h\big) + \mathbf{V}_h^{(1)}\cdot \nabla T_h^{(1)}\right\rangle - \left\langle \nabla(\eta  n_h^{(1)} T_h^{(1)}), \mathbf{V}_h^{(1)} \right\rangle \nonumber \\
&\hspace{2mm}+ \left\langle \mathbf{b}_h^n \cdot \nabla \eta, Pe_\Delta^{-1} \zeta_h\big(\mathbf{b}_h^n, T_h^n\big) \right\rangle \!+\! \left\langle \nabla \eta, Pe_\perp^{-1} \nabla T_h^n \right\rangle \!+\! L_{\kappa_T}\big(\eta, n_h^{(1)}, T_h^{(1)}\big) = 0 & \forall \eta \in \mathring{Q}_k(\Omega_p), \\
&\left\langle \mathbf{\Sigma}, \tfrac{2}{\delta t}\big(\mathbf{B}_h^{(1)} - \mathbf{B}^n_h\big) \right\rangle_{\!\Omega} - \left\langle \nabla \times \mathbf{\Sigma}, \mathbf{V}_h^n \times \mathbf{B}_h^n\right\rangle \nonumber \\
&\hspace{2mm}+ \left\langle \nabla \times \mathbf{\Sigma}, S^{-1}\big(T_h^n\big) \nabla \times \mathbf{B}_h^n\right\rangle_{\Omega} + L_{\kappa_B}\big(\mathbf{\Sigma}, \mathbf{B}_h^{(1)}\big) = 0&\forall \mathbf{\Sigma} \in Nc_k^e(\Omega), \label{B_eqn_discr_stage_I}
\end{align}
\end{subequations}
\endgroup
where the auxiliary variables $\boldsymbol{\omega}_h$, $P_h$, and $\zeta_h$ are defined as in \eqref{3D_MHD_discr} with the corresponding time-discrete arguments. Further, we used expressions
\begin{equation}
\mathbf{w}(\mathbf{B}_h^{(1)}, \mathbf{v}) = \mathbf{B}_h^{(1)} \times \mathbf{v} + c_0 \mathbf{v}, \;\;\;\; \mathbf{V}_h^{(1)} = \mathbf{B}_h^{(1)} \times \boldsymbol{\mathcal{U}}^{(1)}_h + c_0 \boldsymbol{\mathcal{U}}_h^{(1)}, \;\;\;\; \mathbf{V}_h^n = \mathbf{B}_h^n \times \boldsymbol{\mathcal{U}}^n_h + c_0 \boldsymbol{\mathcal{U}}_h^n,
\end{equation}
as well as $\mathbf{b}_h^n = \mathbf{B}_h^n/|\mathbf{B}_h^n|$. Before moving on to the second stage, we note that in the velocity interior penalty term $L_{\kappa_V}$ (cf. \eqref{IP_V}), we set $f$ equal to the inverse relation \eqref{V_mod_inv}, that is, we penalize $\boldsymbol{\mathcal{U}}_h$ instead of $\mathbf{V}_h$, which we found to lead to better results.

In the second stage, given the current prognostic time level and first stage fields $(n_h^n, \boldsymbol{\mathcal{U}}_h^n, T_h^n, \mathbf{B}_h^n)$, $(n_h^{(1)}, \boldsymbol{\mathcal{U}}_h^{(1)}, T_h^{(1)}, \mathbf{B}_h^{(1)})$, respectively, we solve for $(n_h^{(2)}, \boldsymbol{\mathcal{U}}_h^{(2)}, T_h^{(2)}, \mathbf{B}_h^{(2)})$ implicitly in the magnetic wave coupling and heat flux such that
\begingroup
\allowdisplaybreaks
\addtolength{\jot}{3mm}
\begin{subequations} \label{3D_MHD_discr_stage_II}
\begin{align}
&\left\langle \chi, \tfrac{2}{\delta t} \big(n_h^{(2)} - n_h^n\big)\right\rangle - \left\langle \nabla \chi, \mathbf{V}_h^{(1)} n_h^{(1)} \right\rangle + L_{\kappa_n}\big(\chi, n_h^{(1)}\big) = 0& \forall \chi \in Q_k(\Omega_p),\\
&\left\langle n_h^{(2)} \mathbf{w}\big(\mathbf{B}^{(1)}_h, \mathbf{v}\big), \tfrac{2}{\delta t}\big(\mathbf{V}^{(2)}_h \!-\! \mathbf{V}_h^n\big) \!+\! g_{[B,\mathcal{U}]} \!+\!  \boldsymbol{\omega}_h\big(\mathbf{V}_h^{(1)}\big) \!\times\! \mathbf{V}_h^{(1)} \!\!+\! \tfrac{1}{2}\nabla P_h\big(\mathbf{V}_h^{(1)}\big)\!\! \right\rangle \nonumber \\
&\hspace{2mm}+ \left\langle \mathbf{w}(\mathbf{B}_h^{(1)}, \mathbf{v}), \beta \nabla \big(n_h^{(1)} T_h^{(1)}\big) + \mathbf{B}_h^{(2)} \times \big(\nabla \times \mathbf{B}_h^{(2)}\big) \right\rangle \nonumber \\
& \hspace{2mm} + L\big(\mathbf{w}\big(\mathbf{B}_h^{(1)}, \mathbf{v}\big), \mathbf{V}_h^{(1)}\big) + L_{\kappa_V}\big(\mathbf{v}, \boldsymbol{\mathcal{U}}_h^{(1)}\big) =0 & \forall \mathbf{v} \in \mathring{Nc}_k^f(\Omega_p), \label{3D_MHD_discr_V_eqn_stage_II}\\
&\left\langle \frac{n_h^{(2)} \eta}{\gamma - 1}, \tfrac{2}{\delta t}\big(T^{(2)}_h - T^n_h\big) + \mathbf{V}_h^{(1)}\cdot \nabla T_h^{(1)}\right\rangle - \left\langle \nabla(\eta  n_h^{(1)} T_h^{(1)}), \mathbf{V}_h^{(1)} \right\rangle \nonumber \\
&\hspace{2mm}\!+ \!\!\left\langle \mathbf{b}_h^{(2)} \!\!\cdot\!\! \nabla \eta, Pe_\Delta^{\!-1} \zeta_h\big(\mathbf{b}_h^{(2)}, T_h^{(2)}\big)\!\! \right\rangle \!+\! \left\langle\! \nabla \eta,\! Pe_\perp^{-1} \nabla T_h^{(2)}\! \right\rangle \!+\!\! L_{\kappa_T}\big(\eta, n_h^{(1)}\!, T_h^{(1)}\big) \!=\! 0 & \forall \eta \in \mathring{Q}_k(\Omega_p), \label{3D_MHD_discr_T_eqn_stage_II} \\
&\left\langle \mathbf{\Sigma}, \tfrac{2}{\delta t}\big(\mathbf{B}_h^{(2)} - \mathbf{B}^n_h\big) \right\rangle_\Omega  - \left\langle \nabla \times \mathbf{\Sigma}, \mathbf{V}_h^{(2)} \times \mathbf{B}_h^{(2)} \right\rangle\nonumber \\
&\hspace{2mm}+ \left\langle \nabla \times \mathbf{\Sigma}, S^{-1}\big(T_h^{(2)}\big) \nabla \times \mathbf{B}_h^{(2)}\right\rangle_\Omega + L_{\kappa_B}\big(\mathbf{\Sigma}, \mathbf{B}_h^{(1)}\big) = 0&\forall \mathbf{\Sigma} \in Nc_k^e(\Omega), \label{3D_MHD_discr_B_eqn_stage_II}
\end{align}
\end{subequations}
\endgroup
where $\mathbf{V}_h^{(2)}$ and $\mathbf{b}_h^{(2)}$ are defined analogously to $\mathbf{V}_h^{(1)}$ and $\mathbf{b}_h^{(1)}$, respectively. Additionally, the discrete time derivative of the $\mathbf{B}_h$-dependent expression $\mathbf{V}_h$ contains an additional term
\begin{equation}
g_{[B,\mathcal{U}]} = \tfrac{1}{2}\big(\delta \mathbf{B}_h^{(2)} \times \delta \boldsymbol{\mathcal{U}}_h^{(1)} + \delta \mathbf{B}_h^{(1)} \times \delta \boldsymbol{\mathcal{U}}_h^{(2)} \big),    
\end{equation}
which is small in practice (see \cite{krzysik2026imim}).

\subsubsection{Solver procedure}

We next outline the different solver procedures for the two fully discretized stages above, including the outer block preconditioning strategy and inner solve details. In particular, we note that we employ a Newton method with true Jacobians rather than a quasi-Newton one with approximate Jacobians akin to the ones we used to motivate our solver in the previous section, which we only introduced for illustration purposes. As mentioned in Remark \ref{remark_Schur_nonl_MHD_IMIM}, while this leads to a number of additional terms not included in our motivating linearized system \eqref{MHD_system_full}, these do not change our overall preconditioning strategy, nor our approximate Schur complement based on the varational form \eqref{Schur_B_weak}. A conceptual diagram of the overall procedure, including the relative linear and nonlinear solver tolerances used in the numerical results section, is given in Figure \ref{fig_IMIM_solves}.

\textbf{Stage 1.} For \eqref{3D_MHD_discr_stage_I}, we first note that as in the simplified linearized setup \eqref{MHD_system_sound}, the magnetic field equation \eqref{B_eqn_discr_stage_I} is decoupled from the remaining equations. We can therefore solve it separately first to obtain $\mathbf{B}_h^{(1)}$, which requires an inverse involving the interior penalty term $L_{\kappa_B}$. The system arising from the magnetic field equation is approximately inverted using the conjugate gradient method, with an additive Schwartz method based on extruded, vertex-centered star patches\footnote{The patches' extrusion is based on the underlying extruded meshes we use for our numerical tests: a straight extrusion of a 2D periodic square mesh to create a 3D box mesh, and a curved extrusion of a 2D vertical cross-section mesh to create a 3D tokamak mesh.} as a preconditioner. Additionally, the auxiliary variable $\zeta_h(\mathbf{b}_h^n, T_h^n)$ can also be solved for separately first, inverting the corresponding DG mass matrix.

This leaves the (nonlinear) coupling in $n_h^{(1)}$, $\boldsymbol{\mathcal{U}}_h^{(1)}$, $T_h^{(1)}$ as well as the auxiliary variables $\boldsymbol{\omega}_h$, $P_h$ depending on the field $\boldsymbol{\mathcal{U}}_h^{(1)}$ to be solved for. To motivate our block preconditioning setup, we note that the $5 \times 5$ system of equations corresponding to the Jacobian reads
\begingroup
\setlength{\arraycolsep}{4.5pt}
\begin{equation}
\begin{pmatrix}
  M^P & 0 & m_{13} & 0 & 0\\
  0 & M^\omega & C_\mathcal{U}^\top & 0 & 0\\
  \tfrac{1}{2}G & m_{32} & \frac{1}{\alpha\delta t}M^\mathcal{U}_{n_0} \!+\! m_{33} \!+\! L_{\kappa_V} \!+\! L_{Re} & \beta G_{T_0} & \beta G_{n_0}\\
  0 & 0 & - G_{n_0}^T & \frac{1}{\alpha\delta t}M^n \!+\! L_{\kappa_n} \!+\! m_{44}  & 0 \\
  0 & 0 & -G_{n_0T_0}^T \!+\! g_{53} & m_{54} & \tfrac{1}{\alpha\delta t(\gamma-1)}(M^T_{n_0} \!+\! m_{55}) \!+\! L_{\kappa_T} \\
\end{pmatrix}
\!\!\!
\begin{pmatrix}
    P_h \\
    \boldsymbol{\omega}_h \\ 
    \delta \boldsymbol{\mathcal{U}}^{(1)}\\
    \delta n^{(1)}\\
    \delta T^{(1)}
\end{pmatrix}.
\label{MHD_system_sound_full}
\end{equation}
\endgroup
The blocks are defined analogously to \eqref{MHD_system_sound}, except for additional components involving the cross product of the known field $\mathbf{B}_h^{(1)}$, which arise from solving for $\boldsymbol{\mathcal{U}}$ instead of $\mathbf{V}_h$. For instance, for $M^\mathcal{U}_{n_0}$ we have entries
\begin{align}
\{M^\mathcal{U}_{n_0}\big\}_{ij} = \left\langle \mathbf{B}_h^{(1)} \times \mathbf{v}_i + c_0 \mathbf{v}_i, n_h^n \left(\mathbf{B}_h^{(1)} \times \mathbf{v}_j + c_0 \mathbf{v}_j\right) \right\rangle &&\forall \mathbf{v}_i, \mathbf{v}_j \in \mathring{Nc}_k^f(\Omega_p),
\end{align}
which in particular is still symmetric. In addition, we have the blocks $\tfrac{1}{2}G$ and $C_\mathcal{U}^\top$, which correspond to discrete gradient and weak curl operations for the auxiliary variables $P_h$ and $\boldsymbol{\omega}_h$ related to velocity advection. The mass-like blocks $m_{13}$, $m_{32}$, and $m_{33}$ are Jacobian contributions arising from the linearization of the velocity advection terms, while $m_{44}$ and $m_{55}$ arise analogously from the density and temperature advection terms, respectively. The block $m_{54}$ results from the linearization of the sound-wave coupling. Finally, $g_{53}$ is an additional contribution arising from the temperature advection term and has the structure of a transposed discrete gradient.

We precondition the resulting Jacobian system of 5 unknowns using a block Gauss-Seidel decomposition to split off $\boldsymbol{\omega}_h$, $P_h$, followed by a Schur complement to eliminate $\boldsymbol{\mathcal{U}}_h^{(1)}$ in the remaining system in $n_h^{(1)}$, $\boldsymbol{\mathcal{U}}_h^{(1)}$, $T_h^{(1)}$. Specifically, we use a lower triangular factorization of the Schur complement preconditioner, approximating the velocity block inverse in the Schur complement by the inverse of its diagonal. To split the remaining fields $n_h^{(1)}$ and $T_h^{(1)}$, we then again apply a block Gauss-Seidel decomposition. Finally, on the outermost level of the $5\times5$ block system, we apply GMRES \cite{saad1986gmres}, and in each iteration along with this block preconditioning strategy, we apply a) a Jacobi (diagonal) preconditioner for the $\boldsymbol{\omega}_h$, $P_h$ mass matrices, and b) a single application of an algebraic multigrid (AMG) solver via Hypre (BoomerAMG) \cite{falgout2002hypre} for the velocity block and temperature as well as density Schur complements each. Note that such standard AMG methods will not be robust for the hydrodynamics including the div-grad viscosity and interior penalty terms, and here we use AMG for simplicity since this work focuses primarily on the velocity-magnetic field coupling.

\begin{figure}[ht]
\begin{center}
\includegraphics[width=1.0\textwidth]{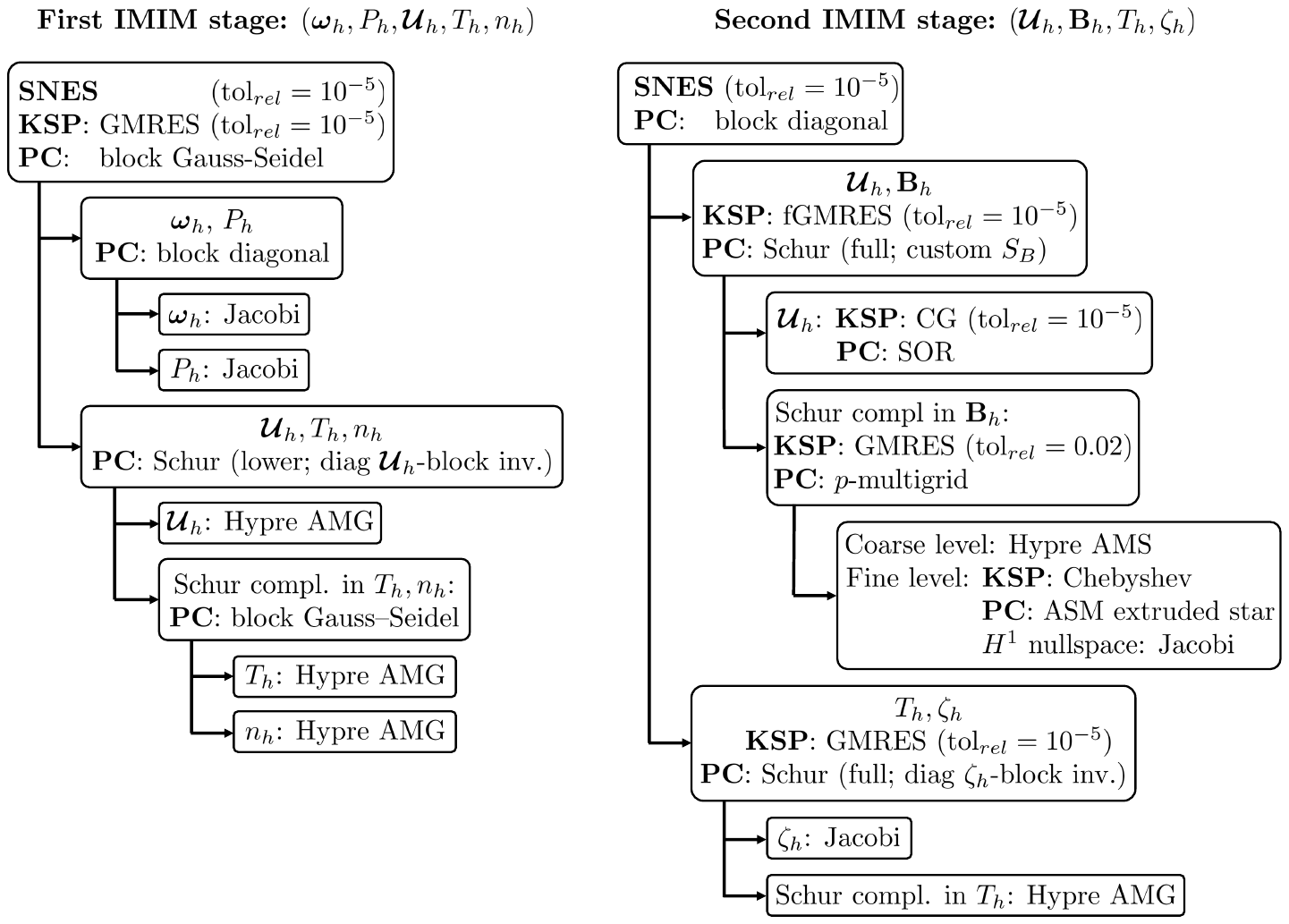}
\vspace{-5mm}
\caption{Diagrams depicting mixed solves for the first and second IMIM stages, centered around the thermohydrodynamic coupling (left) and the Alfvén wave coupling as well as heat flux terms (right), respectively. Here, we follow the naming convention of PETSc \cite{balay2019petsc}, where SNES, KSP, PC correspond to the nonlinear Newton solver, Krylov subspace method and preconditioner, respectively. $\boldsymbol{\omega}_h$ and $P_h$ are auxiliary variables corresponding to velocity advection, and $\boldsymbol{\mathcal{U}}_h$ is the prognostic velocity field we solve for, occurring in the overall velocity field expression $\mathbf{V}_h = \mathbf{B}_h \times \boldsymbol{\mathcal{U}}_h + c_0 \boldsymbol{\mathcal{U}}_h$. $\zeta_h$ is an auxiliary variable corresponding to the temperature's directional gradient. The custom approximation to the varitional form $s_B$ for the Schur complement in the magnetic field is given by $s_{B,n_0}'$ defined in \eqref{Schur_B_weak}. $\mathbf{B}_h$ in the first and $n_h$ in the second IMIM stages, respectively, are decoupled and solved for separately (using the same relative tolerances as for the outer linear mixed solves).} \label{fig_IMIM_solves}
\end{center}
\end{figure}

\textbf{Stage 2.} For the second stage, we first note that the density equation is decoupled, and we solve for $n_h^{(2)}$ using the conjugate gradient method preconditioned by SOR. The remaining equations form a system in $\boldsymbol{\mathcal{U}}_h^{(2)}$, $T_h^{(2)}$, $\mathbf{B}_h^{(2)}$ together with the auxiliary variable $\zeta_h$ (while the values for the auxiliary variables $\boldsymbol{\omega}_h$, $P_h$ can be reused from the Stage 1 solve), which reads
\begin{equation}
\begin{pmatrix}
  \frac{1}{\alpha\delta t}M^\mathcal{U}_{n_0} \!+m_{11}\! \!+\! L_{\kappa_V} \!+\! L_{Re}& 0 & 0 & C_{B_0} + m_{14}  \\
  0 & M^\zeta  & -G_b & m_{24}\\
  0 & G_b^\top & \!\!\!\!\tfrac{1}{\alpha\delta t(\gamma-1)}M^T_{n_0} \!+\! L_{\kappa_T} \!+\! L_T & m_{34}\\
  -C_{B_0}^\top & 0 & m_{43} & \!\!\!\!\frac{1}{\alpha\delta t}M^B \!\! + m_{44} \!+\! L_S
\end{pmatrix}
\!\!
\begin{pmatrix}
    \delta \boldsymbol{\mathcal{{U}}}_h^{(2)}\\
    \zeta_h\\
    \delta T_h^{(2)}\\
    \delta \mathbf{B}_h^{(2)}
\end{pmatrix}. \label{MHD_system_Alfven_full}
\end{equation}
$L_T$ denotes the isotropic heat flux, and $G_b$ corresponds to the directional gradient with respect to $\mathbf{b}_h$. As before, the terms $m_{ij}$ are mass-type Jacobian contributions arising from the linearization. Note that unlike the approximate Jacobian \eqref{MHD_system_Alfven}, the true Jacobian fully couples the temperature and magnetic field updates through $m_{24}$, $m_{34}$ and $m_{43}$, that is through $\mathbf{b}_h^{(2)}$ in the anisotropic heat flux in \eqref{3D_MHD_discr_T_eqn_stage_II} as well as $T_h^{(2)}$ in the resistivity in \eqref{3D_MHD_discr_B_eqn_stage_II}. In practice, for fully nonlinear 3D tokamak simulations, we consider time steps in the order of $\mathcal{O}(1)$ (with $\delta t =1$ corresponding to one Alfvén wave time unit in our non-dimensionalization) and $S$ scaled by at least $10^4$ (which realistically should be even larger), and the coupling due to the temperature-dependent resistivity is therefore very weak. Consequently, we decouple the Alfvén wave and heat flux solves in our preconditioner for \eqref{MHD_system_Alfven_full} (but not in the residuals for the overall Newton method). For the mixed heat flux solve in $(\zeta_h, T^{(2)})$, we then consider GMRES preconditioned by an approximate LDU factorization, with a Schur complement in $T_h^{(2)}$ and a single application of a Jacobi preconditioner for the $\zeta_h$ mass matrix. For the Schur complement in $T_h^{(2)}$, the inverse of the $\zeta_h$ mass matrix is approximated by the inverse of its diagonal, and the resulting approximate Schur complement is preconditioned using a single application of algebraic multigrid (BoomerAMG) in each iteration.

Finally, for the Alfvén wave coupling we apply flexible GMRES \cite{saad1993flexible} preconditioned by an approximate LDU factorization with an approximate Schur complement in (the Newton update for) $\mathbf{B}_h^{(2)}$. Within this factorization, for the velocity block, consisting of a symmetric mass matrix using test and trial functions from the modified function space $\mathring{\mathbb{V}}_V\big[\mathbf{B}_h^{(1)}\big](\Omega_p)$, we use the conjugate gradient method preconditioned by SOR. Further, we apply GMRES for the Schur complement in $\mathbf{B}_h^{(2)}$, with a maximum of 20 iterations per outer fGMRES iteration, which we precondition based on the variational formulation \eqref{Schur_B_weak} (using $n_0 = n_h^n$, $\mathbf{B}_{h,0} = \mathbf{B}_h^n$, and $S = S(T_h^n)$). In each GMRES iteration, we then apply a $p$-multigrid method using AMS on the (lowest-order) $p$-coarse level \cite{kolev2009parallel}. On the $p$-fine levels, we employ a Chebyshev-accelerated smoothing scheme based on an AMS-type preconditioner. The curl--curl relaxation is performed using a patch-based additive Schwarz method on extruded vertex patches. Finally, the auxiliary $H^1$ problem is approximated using a Jacobi (diagonal) method, noting that in practice, we found this to be sufficient within the wider context of the $p$-multigrid scheme.

\begin{remark}(Heat flux solve)
For the mixed solve in $T_h^{(2)}$-$\zeta_h\big(\mathbf{b}_h^{(2)}, T_h^{(2)}\big)$, we note that efficient solvers for non-mesh aligned anisotropic heat flux is a challenging and ongoing research topic (see e.g., \cite{southworth2026algebraic,wimmer2024fast}). Since in this work we focus on the Alfvén wave coupling, for simplicity, here we use an artificially low parallel thermal Péclet number \eqref{non_dim_coeff} together with the aforementioned Schur complement based approach with standard algebraic multigrid.
\end{remark}


%
%
\section{Numerical results} \label{sec_Numerical_results}
Having introduced our novel solver strategy and corresponding space and time discretization, we move on to presenting numerical results to test the method's order of accuracy and stability, as well as solver properties. The tests were performed using Firedrake~\cite{FiredrakeUserManual}, which heavily relies on PETSc \cite{balay2019petsc}. As discussed in Section \ref{sec_space_discr}, we consider hexahedral meshes with edge- and face-based (Raviart-Thomas-) Nédélec spaces for the magnetic field $\mathbf{B}_h$ and velocity field $\boldsymbol{\mathcal{U}}_h$, respectively. Further, we will consider spaces with polynomial degree $k=2$ (and $k-1=1$ for the auxiliary variable $\zeta_h$ DG space) throughout. Additionally, the continuous interior penalty terms in \eqref{CIP_terms} are set to\footnote{The main motivation for this $\delta t$-dependent penalty -- with $\delta t$ typically in the order of $\mathcal{O}(1)$ -- is that it allows the matrix associated with the magnetic field solve's left-hand side in \eqref{B_eqn_discr_stage_I} (multiplied by $\delta t$) to be assembled only once at the beginning of the simulation in our Firedrake-based implementation, even when variable time steps are used. This substantially reduces the computational cost associated with solving \eqref{B_eqn_discr_stage_I}. For consistency, $\kappa_n$ and $\kappa_T$ are chosen to be equal to $\kappa_B$.} $\kappa_n, \kappa_T, \kappa_B = 10^{-3}h_e/\delta t$, while the (discontinuous) interior penalty term for the velocity is set to $\kappa_V = h_e$. Finally, unless otherwise noted, the nonlinear as well as outer and inner linear solver tolerances are set as described in Figure \ref{fig_IMIM_solves}.

\subsection{Order of accuracy, discrete energy}

First, we test for our novel space discretization's accuracy as well as total energy evolution, noting that the scheme's zero-divergence property was already demonstrated in \cite{wimmer2024structure}. For this purpose, we consider a periodic box test case with domain $\Omega = [0, 1]^3$, together with a regular mesh resolution in $xyz$-coordinates given by $\{1/16, 1/16, 1/3\}\times2^{-a}$, for $a \in \{0, 1, 2\}$, as well as a fixed time step $\delta t = 0.02$, run up to $t_{max}=0.1$. The initial conditions are given by
\begin{align}
    &\mathbf{V}|_{t=0} \equiv \mathbf{0}, &\mathbf{B}|_{t=0} = B_z \mathbf{e}_{\mathbf{z}} + B_\phi g(r)\mathbf{e}_\phi, \;\;\;\;\; p|_{t=0} = p_b - \frac{B_\phi^2}{2\beta} \left(g(r)^2 + \int_0^r\tfrac{g(s)}{s}\;ds\right), \label{Box_IC}
\end{align}
for cylindrical coordinate angular and vertical  unit vectors $\mathbf{e}_\phi, \mathbf{e}_z$, respectively, radial coordinate $r$, and where $g(r) = r e^{-(r - r_0)^2/\sigma_0^2}$, for $r_0 = 0.25$, $\sigma_0 = 0.1$. Note that the integral involving $g$ can be computed analytically using the error function. Further, we set $B_z, B_\phi, p_b, \beta$ equal to $0.8, 1.5, 5, 0.02$, respectively, and the initial density and temperature expressions are defined as $n|_{t=0} = (p|_{t=0})^{0.3}$, $T|_{t=0} = (p|_{t=0})^{0.7}$. Finally, we set the Reynolds number to $Re = 10^4$, and skip the resistivity and heat flux terms. Together with \eqref{Box_IC}, this implies a scenario with a non-decaying equilibrium, such that the analytic solution is equal to the initial conditions. The simulation is run using nonlinear and linear solver tolerances of $10^{-10}$ throughout, and the resulting prognostic fields' $L^2$ errors are given in Figure \ref{fig_accuracy_energy}.

\begin{figure}[ht]
\begin{center}
\includegraphics[width=1.0\textwidth]{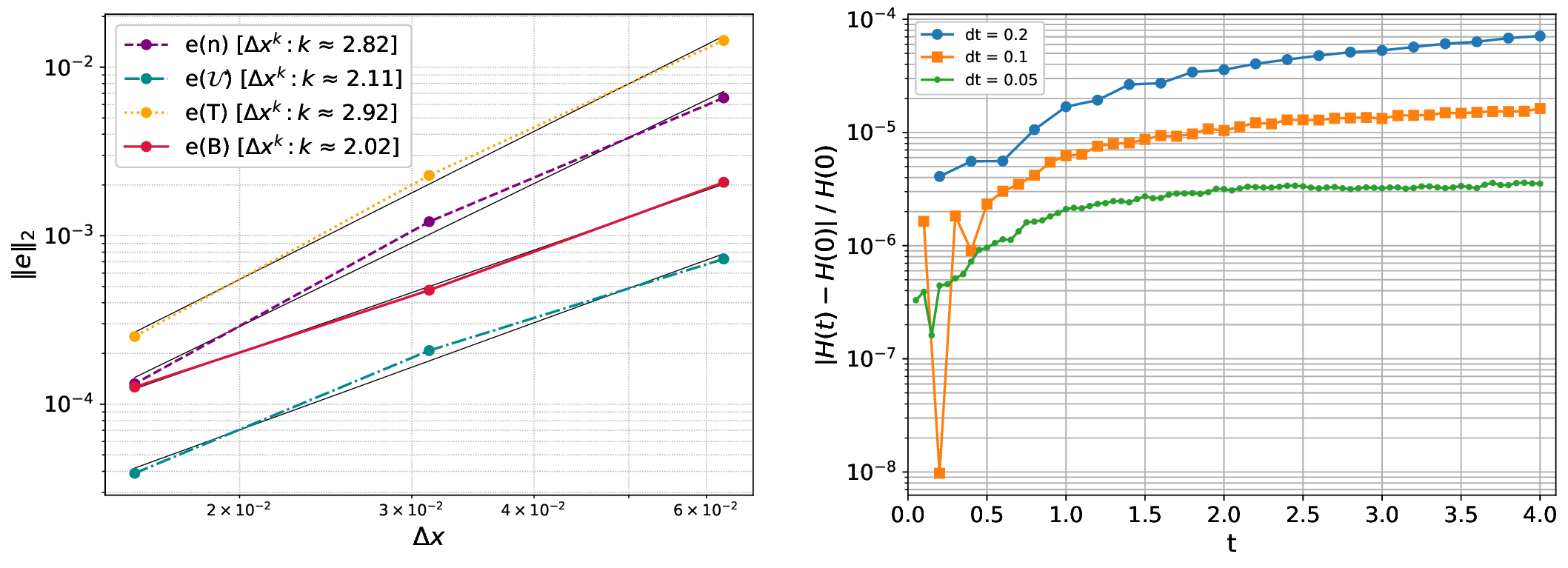}
\vspace{-5mm}
\caption{Plots based on test case \eqref{Box_IC}. Left: accuracy plots for the prognostic fields $n_h$, $\boldsymbol{\mathcal{U}}_h$, $T_h$, $\mathbf{B}_h$, with $e(n) = \|n_h - n|_{t=0}\|_2$, etc. Right: energy development over time.} \label{fig_accuracy_energy}
\end{center}
\end{figure}

We find that the spatial orders of accuracy are as expected, with near third order for the $CG_2(\Omega)$-fields $n_h$, $T_h$ and second order for the $Nc_2^e(\Omega)$, $Nc_2^f(\Omega)$ (Raviart-Thomas-) Nédélec spaces. Note that the inclusion of interior penalty terms may reduce the order of accuracy by a half \cite{burman2006edge}.

Finally, we consider the space discretization's energetic structural consistency by running the same test case at the lowest spatial resolution, and with three separate simulations with fixed time step $\delta t \in \{0.05, 0.1, 0.2\}$, up to $t_{max} = 4$ each. Further, we additionally skip the viscosity as well as density, velocity, and magnetic field interior penalty terms, such that the space-discretized semi-discrete total energy $H$ (see \eqref{H_final}) is conserved. The resulting total energy development -- as a function of time -- is depicted in Figure \ref{fig_accuracy_energy}, and we find that as expected, the energy error decreases as the time step is decreased (noting that the IMIM time scheme is not energy conserving). The simulations' last values are given by approximately $7.11 \times 10^{-5}$, $1.63 \times 10^{-5}$, and $3.53 \times 10^{-6}$, which corresponds to an overall convergence rate (in time) of approximately $2.17$. 
Overall, our proposed spatial discretization including the modified velocity space \eqref{V_space_mod_0} therefore behaves as expected with respect to accuracy and an energetically balanced structure.

\subsection{Tokamak Simulations and Solver Performance}

Having confirmed our spatial discretization's numerical properties, we next focus on solver efficiency by means of more realistic tokamak simulations based on equilibria from Grad-Shafranov solvers. For our finite element discretization, we consider 2D poloidal plane meshes as depicted in Figure \ref{fig_tokamak}, extruded toroidally -- in a higher order fashion to be further detailed below -- to obtain 3D hexahedral meshes. Projecting the Grad-Shafranov-solver based equilibria onto the finite element meshes introduces a non-negligible perturbation -- for further discussion in the context of curl- and div-conforming elements, see \cite{zhang2025structure} -- and in order to attenuate the resulting initial re-equilibration dynamics, we start our simulations using a time step of $\delta t = 0.02$ Alfvén times, which is then increased in each time step by a factor of $1.5$ until a final value of $\delta t = 1.5$ is reached (by approx. $\tau_A = 3.5$ Alfvén times). Finally, the tokamak domain contains a vacuum vessel and wall region, in which only the resistivity is evolved. As mentioned in Section \ref{sec_background}, throughout the numerical results section, we consider fixed Lundquist numbers $S_{vv} = 10^5$, $S_w = 10$ for these two regions.

\begin{figure}[ht]
\begin{center}
\includegraphics[width=1.0\textwidth]{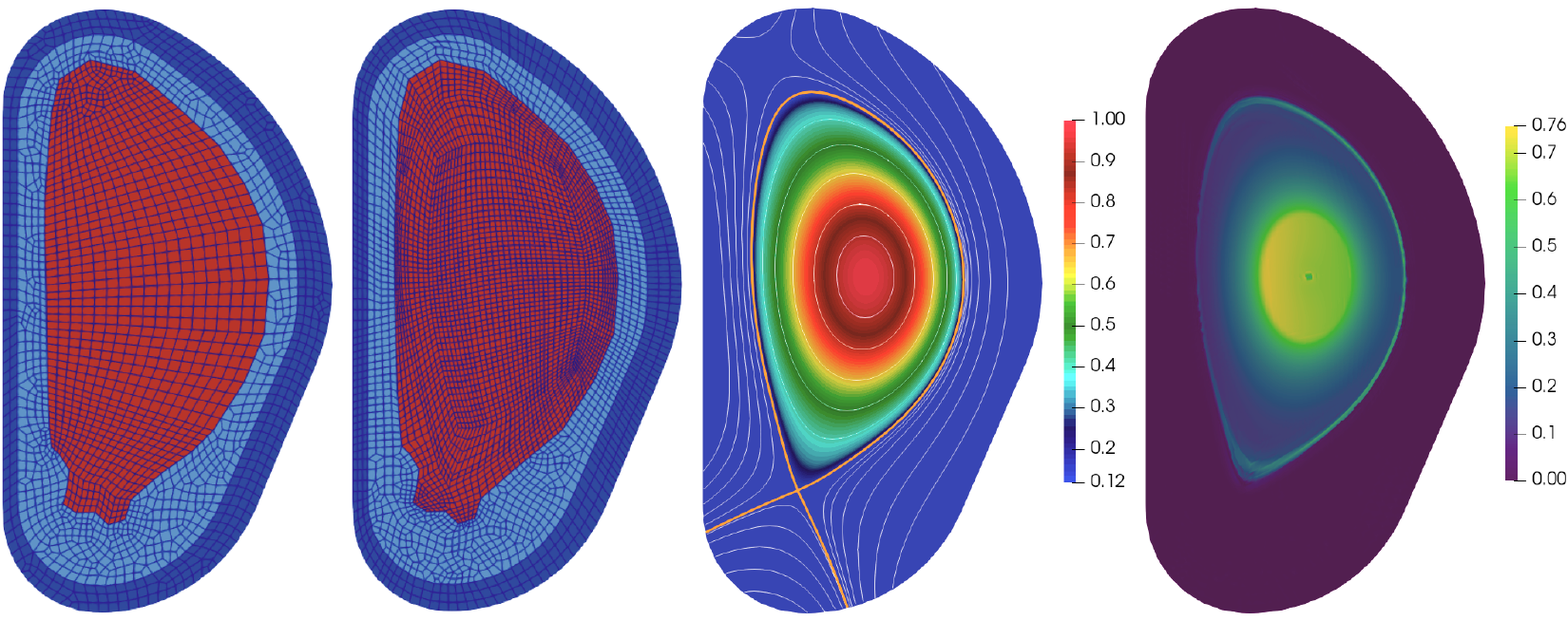}
\vspace{-5mm}
\caption{Poloidal base meshes and fields used for tokamak simulations. First, second images: general and separatrix aligned meshes, containing 2171 and 4528 elements, respectively. Blue, light blue, and red colors indicate the vacuum vessel, wall, and plasma regions, respectively. Third, fourth images: initial (non-dimensionalized) temperature field, including poloidal cross-section of magnetic field lines (with x-point contour in orange), and toroidal current density, respectively, for ITER-type equilibrium.} \label{fig_tokamak}
\end{center}
\end{figure}

\subsubsection{Solver robustness}  We will start with a low-dissipation equilibrium scenario, used to test solver robustness with respect to spatial resolution and time step size. Additionally, in tokamak scenarios, the (poloidal) $(r, z)$-plane and (toroidal) $\phi$-coordinate directions often differ strongly in resolution, leading to large mesh aspect ratios. This may affect solver performance too, and we will therefore also test with various toroidal resolutions.

We use a general, non-separatrix aligned 2D poloidal mesh (left image in Figure \ref{fig_tokamak}), of which we will consider two uniform spatial refinements, for a total of 3 spatial refinement levels $L_\mathrm{pol}$. The poloidal mesh is then extruded toroidally, and in the following, we will consider the following toroidal refinement levels of pairs (toroidal resolution, mesh degree), chosen to adequately resolve the tokamak's angular curvature: $L_\mathrm{tor} = \{0: (3, 6), 1: (6, 4), 2: (12, 2), 3: (24, 2)\}$. Finally, for the temporal refinement study, instead of capping at $\delta t = 1.5$, we cap at $\delta t \in \mathcal{T} = \{0.5, 0.75, 1, 1.25, 1.5, 2., 2.5, 3\}$. Altogether, the uniform and toroidal spatial as well as temporal refinement studies considered in this work are given by
\[
\begin{array}{c|c|c}
\text{Study} & (L_{\mathrm{pol}}\;|\;L_{\mathrm{tor}}) & \delta t_{max} \\
\hline
\text{uniform refinement} & \{(0\;|\;0), (1\;|\;1), (2\;|\;2)\} & 1.5 \\
\text{uniform refinement} & \{(0\;|\;1), (1\;|\;2), (2\;|\;3)\} & 1.5 \\
\text{toroidal refinement} & \{(0\;|\;0), (0\;|\;1), (0\;|\;2)\}, (0\;|\;3)\} & 1.5 \\
\text{varying } \delta t_{max} & (0 \;|\; 0) & \mathcal{T} \\
\text{varying } \delta t_{max} & (0 \;|\; 1) & \mathcal{T}
\end{array}
\]
Here, we note that the coarsest resulting 3D mesh -- given by poloidal/toroidal refinement levels $(0\;|\;0)$, that is no refinement poloidally, and a toroidal resolution of 3 layers -- contains $6513$ elements, while the finest one -- given by $(2\;|\;3)$ -- contains $2,500,992$ elements.

For our solver robustness tests, we consider a stable, axisymmetric equilibrium generated using \cite{liu2021parallel}, with a safety factor -- that is the ratio of toroidal to poloidal rotations of magnetic field lines -- varying between approximately $q_0=1.5$ at the magnetic axis and $q_{95}=6$ towards the separatrix (Figure \ref{fig_tokamak}). The equilibrium resembles an ITER discharge, with a magnetic field strength of $B_0 = 5.42$ T as well as a pressure of $p_0 = 656$ kPa, both measured at the magnetic axis. Once the magnetic and pressure fields are transferred to the MHD mesh, the magnetic field is divergence-cleaned (up to solver tolerance, which we set to $10^{-6}$). Further, for the pressure -- which by default is zero outside the separatrix in the Grad-Shafranov solver -- we first add a constant floor value and then non-dimensionalize by the resulting pressure at the magnetic axis. The non-dimensional pressure field then takes values in $[p_b, 1]$, and we set the floor such that $p_b = 0.01333$. With this non-dimensionalization, we have $\beta \approx 0.03$. As in the accuracy test, the initial density and temperature are set to (the projections of) $p_h^{0.3}$ and $p_h^{0.7}$, respectively. Here, we note that cross-interpolation from the Grad-Shafranov solution to the MHD mesh may locally lead to small negative pressure values. For this reason, we add smooth floor functions to our scheme at all occurrences of rational powers -- i.e., the computations of $n_h$, $T_h$ as functions of $p_h$ with a floor of $p_b$, as well as the $T_h$-dependent Lundqusit number \eqref{non_dim_coeff} with a floor of $T_b$. Furthermore, the dissipation coefficients are set to
\begin{equation}
    Re = 10^5, \;\;\;\; S = 10^7 (\mathbb{F}(T)/T_b)^{-3/2}, \;\;\;\; Pe_\parallel = 20, \;\;\;\; Pe_\perp = 10^7,
\end{equation}
where $\mathbb{F}(T)$ within $S$ denotes the aforementioned floor function. In terms of the system's stiffness, we additionally note that the Alfvén and sound wave CFL numbers are defined by
\begin{equation}
    \text{CFL}_A = \frac{k \delta t}{\sqrt{|n_h|}} \left(\frac{|\mathbf{B}_{p, h}|}{h_p} + \frac{|B_{t,h}|}{h_t}\right), \;\;\;\;\;\; \text{CFL}_s = \frac{k \delta t \sqrt{\gamma \beta |T_h|}}{h_p},
\end{equation}
for polynomial degree $k$, poloidal, toroidal components $\mathbf{B}_{p, h}$, $B_{t,h}$ of $\mathbf{B}_h$, and poloidal, toroidal local cell length scales $h_p$, $h_t$, respectively. Note that in CFL$_s$, we use $h_p$ since sound waves are isotropic, and for our anisotropic mesh, $h_p < h_t$. For the non-separatrix aligned mesh at the lowest refinement level with 3 toroidal layers, together with the above field setup at a time step $\delta t = 1.5$, we find CFL$_A \approx 40$, CFL$_s \approx 6.5$. Further, we note that for our coarsest mesh $(0\;|\;0)$ (and its spatially uniform refinements), the toroidal-to-poloidal mesh aspect ratio within the plasma region varies approximately from 34 to 62.

The simulations are run up to $\tau_A = 20$ Alfvén times, and inner solver counts for the Schur complement in $\mathbf{B}_h$ are shown in Tables \ref{Table_SchurB_robustness_space} and \ref{Table_SchurB_robustness_time}. For uniform spatial refinement, we find an increase in iterations by a range of factors between 1.26 and 1.82. We expect achieving the ideal scaling factor of $1$ to be difficult given the complexity of the mixed-form anisotropic curl-curl operator underlying the Alfvén wave dynamics, the realistic scenario with a non-regular mesh and strongly varying prognostic fields, and also the strongly anisotropic mesh configuration. Further, for the highest resolution run with the largest increase by a factor of 1.82, considerations related to MPI decomposition may additionally play a role; see Remark \ref{remark_weak_scaling} below. In line with the mesh anisotropy, we observe that for the case of refining in the toroidal direction only, the iteration count decreases as the toroidal resolution is increased, until the last refinement. This decreasing, then increasing change in factors may be due to a balance of decreasing mesh anisotropy versus increasing Alfvén wave CFL due the magnetic fields' large toroidal component $B_{t,h}$, as the toroidal resolution is refined. Finally, for the varying time step study, we also observe a moderate increase in inner iterations, with a factor of about 1.4 and 3.7 (relative to $\delta t_{\max} = 0.5$) when moving from $\delta t = 0.5$ to $\delta t = 1$ and to $\delta t = 3$, respectively. Overall, given the challenging nature of this problem -- in which we specifically target a realistic tokamak problem setup -- we find these increases for spatial and temporal refinements to be reasonable.

Before moving on to our second quantity of interest -- wall-clock times for longer tokamak simulations -- we note that for all of the tests, we generally observed 2 to 3 nonlinear iterations per time step. Further, we do not show iteration counts for the first IMIM stage's solves related to sound waves and magnetic field stabilization, nor the heat flux solve, since these are not the focus of this work. In practice, our simple boomerAMG-based preconditioning strategy for the sound waves and heat flux will generally not be robust. In particular, robust solvers for the heat flux, especially for realistic anisotropy ratios, is an ongoing research topic.

\begin{table}
\begin{center}
\begin{tabular}{|c|cc|cc|cc|}
\hline
& \multicolumn{2}{c|}{Uniform ref., 3 tor. layers}
& \multicolumn{2}{c|}{Uniform ref., 6 tor. layers}
& \multicolumn{2}{c|}{Tor. ref.} \\
\cline{2-7}
Level
& Schur-B its & Rel. factor
& Schur-B its & Rel. factor
& Schur-B its & Rel. factor \\
\hline
0 & 48.00 &  & 43.75 &  & 48.88 &  \\
1 & 67.62 & 1.41 & 55.00 & 1.26 & 45.25 & 0.93 \\
2 & 98.50 & 1.46 & 100.5 & 1.83 & 27.50 & 0.61 \\
3 &&&&& 31.5 & 1.16\\
\hline
\end{tabular}
\caption{Schur-B iteration counts per nonlinear iteration averaged over the last four time steps for uniform spatial refinement tests (first two columns) as well as for toroidal refinement test (third column).}
\label{Table_SchurB_robustness_space}
\end{center}
\end{table}

\begin{table}
\begin{center}
\begin{tabular}{cc}
\begin{tabular}{|c|c|c|}
 \hline
 \multicolumn{3}{|c|}{3 toroidal layers} \\
 \hline
 $\delta t_{\max}$ & Schur-B its. & Factor wrt. $\delta t_{\max} \!=\! 0.5$ \\
 \hline
 0.50 & 18.50 &  \\
 0.75 & 23.50 & 1.25 \\
 1.00 & 26.75 & 1.42 \\
 1.25 & 34.25 & 1.82 \\
 1.50 & 49.62 & 2.64 \\
 2.00 & 55.00 & 2.93 \\
 2.50 & 68.12 & 3.62 \\
 3.00 & 69.50 & 3.70 \\
 \hline
\end{tabular}
&
\begin{tabular}{|c|c|c|} 
 \hline
 \multicolumn{3}{|c|}{6 toroidal layers} \\
 \hline
 $\delta t_{\max}$ & Schur-B its. & Factor wrt. $\delta t_{\max} \!=\! 0.5$ \\
 \hline
 0.50 & 18.25 &  \\
 0.75 & 19.75 & 1.08 \\
 1.00 & 26.5 & 1.45 \\
 1.25 & 29.5 & 1.61 \\
 1.50 & 40.62 & 2.22 \\
 2.00 & 47.75 & 2.61 \\
 2.50 & 53.75 & 2.94 \\
 3.00 & 56.12 & 3.07 \\
 \hline
\end{tabular}
\end{tabular}
\caption{Schur-B iteration counts per nonlinear iteration averaged over the last four time steps for the temporal refinement test, using 3 (left) and 6 (right) toroidal layers.}
\label{Table_SchurB_robustness_time}
\end{center}
\end{table}

\paragraph{Solver strategy components.} Other than the IMIM-split time stepping method, two key components of our solver strategy relate to using AMS for the Schur complement in $\mathbf{B}_h$, as well as considering the velocity in the modified function space \eqref{V_space_mod_0}. Alternatively, one could e.g., use boomerAMG for the Schur complement, and simply consider the velocity to be a function of the edge-based space $\mathring{N}c_k^e(\Omega)$. To test this, we consider our base poloidal mesh with two toroidal refinements. Replacing AMS by boomerAMG, we find that as the time step is increased, at $\delta t = 0.513$, we require over 4000 inner iterations. Keeping AMS, but replacing the modified velocity space with $\mathring{N}c_k^e(\Omega)$, we find that at $\delta t = 0.342$ and $\delta t = 0.513$, the non-modified velocity space setup takes 41 and 70 inner iterations, respectively, while the modified setup takes 17 and 21, respectively (growing to only 27.5 at $\delta t = 1.5$; cf. the right-most table in Tables \ref{Table_SchurB_robustness_space}). This indicates that while our strategy including the modified velocity space remains relatively robust -- with respect to temporal refinement -- this is not the case when using $\mathring{N}c_k^e(\Omega)$ instead. A similar observation can be shown to also hold true for spatial refinement.

\begin{remark}[Weak scaling] \label{remark_weak_scaling}
The uniform spatial refinement study starting with 3 toroidal layers was performed using 16, 128, and 1024 MPI processes, respectively, on the NERSC HPC cluster Perlmutter. For the three refinements, the magnetic field Schur complement solve takes approximately $6.2$, $10$, and $30.6$ seconds per nonlinear iteration, respectively. The first refinement step shows a moderate increase in runtime by a factor of 1.6. The final refinement step exhibits an additional increase by a factor of 3.0, which we suspect is related to computational overhead from increased cross-node communication (noting that each node can host up to 64 MPI processes), as well as to domain-decomposition-related effects in Firedrake, where MPI subdomains consist of toroidally extruded subdivisions of the underlying poloidal 2D mesh. In particular, this increases the fraction of degrees of freedom contained in MPI halo regions within the poloidal plane, as the local MPI subdomains become progressively thinner under refinement. Similarly, the uniform spatial refinement study starting with 6 toroidal layers was performed using 32, 256, and 2048 MPI processes, respectively, and we suspect that the increase in MPI ranks also contributes to the larger increase in inner iterations (from 55 to 100.5; cf. the center table in Table \ref{Table_SchurB_robustness_space}).
\end{remark}

\subsubsection{Long tokamak runs} \label{sec_tokamak_long_runs}
In our final suites of numerical tests, we consider long tokamak runs to examine our numerical scheme's wall-clock times as well as accuracy for more practical scenarios. For this purpose, we consider the separatrix-aligned mesh (second image in Figure \ref{fig_tokamak}), together with a background pressure of $p_b = 0.05$ to avoid instabilities related to negative temperature or density values -- these could alternatively be avoided e.g., by using limiters, which for simplicity we do not consider in this work. For this choice of $p_b$, we note that the floor function $\mathbb{F}$ used in the robustness studies above is not needed.

We consider three separate configurations, given by a near-equilibrium configuration, a vertical displacement event (VDE), and a 1-1 kink mode:
\[
\begin{array}{l|l|c|c|c|c|c|c}
\text{Case}
& \text{IC}
& t_{\max}
& Re
& S
& Pe_{\parallel}
& Pe_{\perp}
& \text{(Tor. layers, mesh deg.)}
\\
\hline
\text{Near equilibrium} & \text{ITER} & 1000 & 10^5& 10^7& 20& 10^7& (3, 6)\\
\text{VDE}              & \text{ITER} & 6000 & 10^4& 10^4& 20& 2\times 10^3& (3,6)\\
\text{Kink}             & q < 1 & 6000 & 10^5& 10^4& 100& 10^5& (6, 4) \\
\end{array}
\]
where the ``ITER'' initial condition corresponds to the one used in the robustness study, while the ``$q<1$'' one corresponds to another equilibrium in which the safety factor is less than one in a small region off the magnetic axis, allowing for kink instabilities. Further, the kink instability scenario includes a 1-1 mode perturbation $p_{\mathrm{pert}}$ of magnitude up to $10^{-3}$, which is centered at the region where $q < 1$, and which is added to the initial pressure profile. Note that in order to resolve the 1-1 mode toroidally, we consider 6 toroidal layers, which for our second order spaces corresponds to 12 degrees of freedom in the toroidal direction. Finally, each simulation is run with a maximum time step of $\delta t_{\max} = 1.5$, using 256 MPI processes distributed across 4 nodes on the NERSC HPC cluster Perlmutter.

Averages solver times for the full scheme's IMIM components as well as $\mathbf{B}_h$-Schur complement iteration counts are given in Table \eqref{Table_long_run_solver_stats}. We find that overall, the VDE and near-equilibrium runs have similar iteration counts and solve times, except for a slight increase for the latter in the near-equilibrium run due to a more challenging heat flux and sound wave solve, both of which are likely related to the less dissipative regime in the near-equilibrium run. Further, the kink mode simulation requires significantly more iterations due to the more challenging 3D perturbed dynamics, which -- together with the higher toroidal resolution -- leads to overall higher solver times. However, the runtime generally remains moderate even for the kink simulation, with an overall runtime of approximately 2:40 hours for the first 1000 Alfven times of this challenging 3D tokamak simulation. This is due to a combination of the moderate inner solver iteration and consistently low nonlinear iteration counts (2 to 3) at a time step of $\delta t = 1.5 \tau_A$, together with a toroidal resolution that is not unnecessarily high, owing to the solver's reasonable robustness with respect to mesh anisotropy and the fourth-order mesh in the toroidal direction. Additionally, while the table only considers the window $\tau_A \in [850-1000]$, we note that we found similar values for the VDE and kink mode simulations beyond $\tau_A > 1000$. Finally, averaging over the last 500 time steps (instead of 100) yields comparable mean iteration counts for the equilibrium and VDE cases. For the nonlinear kink simulation, larger fluctuations are observed for this larger averaging window (e.g., 58.7 $\pm$ 12.4 Schur-B iterations), reflecting the increased complexity of the 3D dynamical state.

\begin{table}
\begin{center}
\begin{tabular}{|c|cc|cc|cc|}
\hline
&
\multicolumn{2}{c|}{Alfv\'en wave + heat flux}
&
\multicolumn{2}{c|}{Sound wave}
&
\multicolumn{2}{c|}{B stabilization}
\\
\cline{2-7}
Run
& $S'_{B,n_0}$ Its. & Time [s]
& $S'_{B,n_0}$ Its. & Time [s]
& $S'_{B,n_0}$ Its. & Time [s]
\\
\hline
Equilibrium
& $41.7 \pm 1.4$
& $2.44 \pm 0.08$
& $74.3 \pm 1.6$
& $1.93 \pm 0.16$
& $89.2 \pm 0.4$
& $0.293 \pm 0.007$
\\
VDE
& $49.0 \pm 0.0$
& $2.27 \pm 0.02$
& $50.0 \pm 0.3$
& $1.62 \pm 0.02$
& $91.3 \pm 0.9$
& $0.338 \pm 0.007$
\\
Kink
& $67.7 \pm 0.2$
& $4.84 \pm 0.04$
& $132.9 \pm 9.5$
& $4.70 \pm 0.28$
& $108.4 \pm 1.4$
& $0.887 \pm 0.023$
\\
\hline
\end{tabular}
\caption{Average Schur-B iteration counts per nonlinear iteration and overall solver times per time step over the 100 time step interval $\tau_A \in [850-1000]$ Alfvén times. Values are reported as mean $\pm$ standard deviation.}
\label{Table_long_run_solver_stats}
\end{center}
\end{table}

Next to solver iteration counts and wall-clock times, we also consider the simulations' field development. Figure \ref{fig_VDE_kink} depicts a poloidal cross-section of the kink and VDE simulations, and Figure \ref{fig_equilibrium_long} depicts the MHD equilibrium and relative total energy departures over time for the near-equilibrium run. We find that as expected, the initial seed perturbation $p_\mathrm{pert}$ leads to a large-scale, 3D instability in the kink mode simulation. Further, we find that this evolution continues up until our maximum time $t_{\max} = 6000$, with the overall temperature decreasing over time due to breaking flux surfaces and the resulting parallel heat conduction towards the plasma-facing wall. For corresponding figures, see Appendix \ref{app_figures_MHD}. Similarly, as expected, in the VDE simulation the plasma column moves upward over time and touches the upper plasma-facing wall.

Further, for the near-equilibrium run, we find that after an initial phase with stronger dynamics -- recalling that the Grad-Shafranov solution to MHD initial condition transfer leads to a small initial equilibrium error -- the state slowly relaxes back to an equilibrium state. Additionally, as expected, since the resistivity and perpendicular diffusion take effect at a much slower time scale, the relative energy error remains below the solver tolerance of $10^{-5}$. 

\begin{figure}[ht]
\begin{center}
\includegraphics[width=1.0\textwidth]{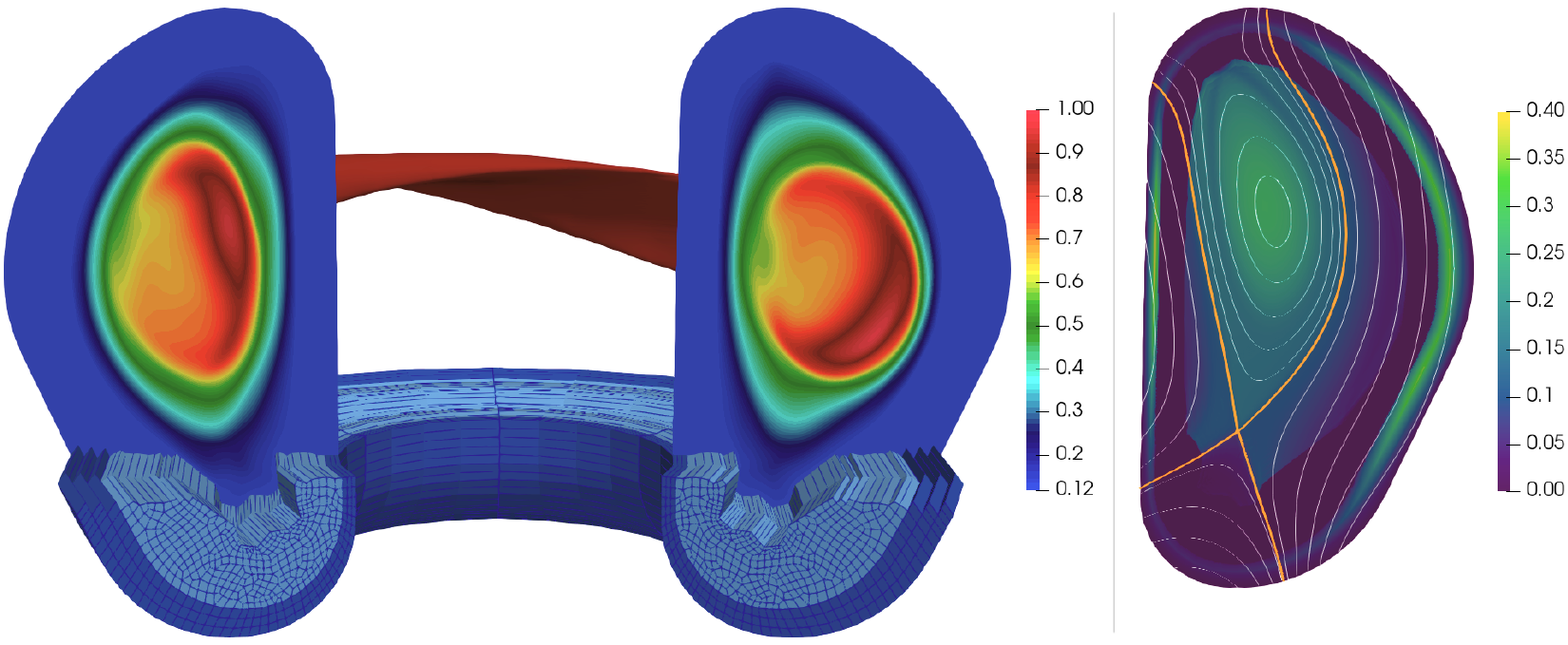}
\vspace{-5mm}
\caption{Left image: poloidal cross-section of temperature profile for kink mode run at $\tau_A = 1000$ Alfvén times; portion of toroidally $4^{th}$ order mesh for $\Omega_v$, $\Omega_w$ near the bottom; 3D contour of temperature field evaluated at $T_h = 0.9$ near the center. Right image: poloidal cross-section of toroidal current density component, as well as magnetic field lines (with x-point contour in orange), for VDE run at $\tau_A = 6000$.} \label{fig_VDE_kink}
\end{center}
\end{figure}

\begin{figure}[ht]
\begin{center}
\begin{minipage}{0.49\textwidth}
\centering
\includegraphics[width=\textwidth]{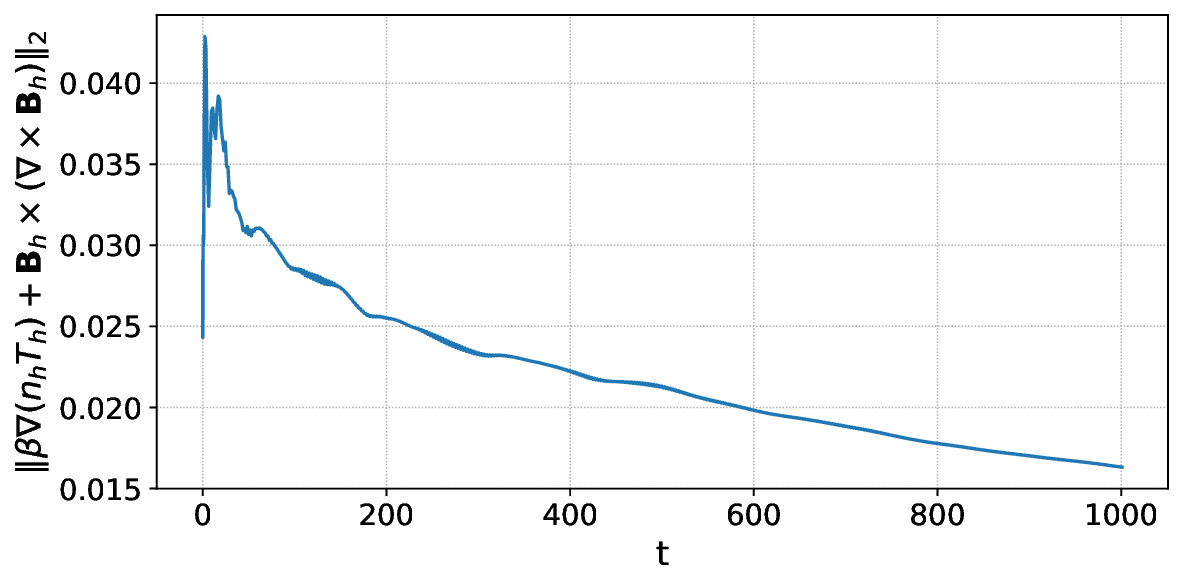}
\end{minipage}
\hfill
\begin{minipage}{0.49\textwidth}
\centering
\includegraphics[width=\textwidth]{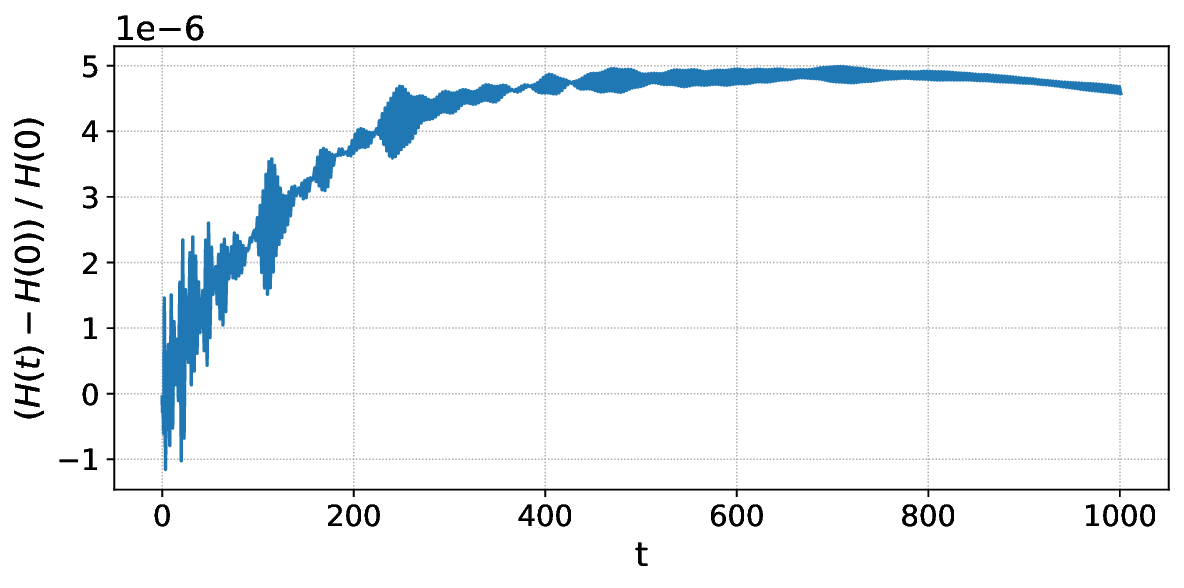}
\end{minipage}
\vspace{-3mm}
\caption{Diagnostics for the near-equilibrium tokamak simulation. Left: departure from an MHD equilibrium state measured in the $L^2$ norm. Right: relative total energy departure over time.}
\label{fig_equilibrium_long}
\end{center}
\end{figure}
%
%
\section{Conclusion} \label{sec_conclusion}
In this work, we presented a novel solver strategy for the MHD equations in the low-$\beta$ regime of magnetic confinement fusion, targeting the system's stiff magnetic wave coupling. The approach combines a curl-conforming finite element discretization of the magnetic field $\mathbf{B}$ -- which also preserves a discrete divergence-free property -- with a physics-based block preconditioner centered on a Schur complement in $\mathbf{B}$. To obtain an effective Schur complement approximation, we employ an implicit-implicit split discretization that removes sound-wave and velocity-dissipation contributions from the Schur complement. In addition, we construct a specialized velocity finite element space that incorporates the cross product with $\mathbf{B}$ and is designed such that the velocity-space projection vanishes in the approximate Schur complement. Finally, the resulting Schur complement operator is solved using an auxiliary-space Maxwell (AMS) preconditioner.

In numerical tests, we confirmed the accuracy of the spatial discretization and its energetically consistent structural properties. Further, we demonstrated moderate robustness of the solver strategy with respect to spatial and temporal refinement for a realistic ITER-like test case featuring general reactor geometry, a low-dissipation regime, and highly anisotropic meshes. In particular, we showed that in addition to using AMS for the Schur complement, the custom velocity space is essential in our approach to ensure low iteration counts at larger time steps and finer spatial resolutions. Finally, we conducted longer simulations for three different tokamak scenarios, including a challenging kink-mode instability. In all cases, the observed field evolution was as expected, while the solver maintained relatively low iteration counts and practical wall-clock times throughout. Altogether, this work provides an efficient and accurate solver framework for MHD simulations in realistic 3D tokamak geometries.

In future work, we plan to further increase our scheme's efficiency through incorporating known, more robust solvers for the anisotropic heat flux and sound wave dynamics. Additionally, we aim to incorporate more advanced physical terms in our scheme, including e.g., more suitable boundary and interface conditions, as well as extended MHD terms such as Hall MHD and kinetic Alfvén waves. In particular, our solver strategy based on a Schur complement in the magnetic field equation is readily amenable to such terms, since they relate to a generalized Ohm's law applied in the magnetic field equation.

\textbf{Acknowledgments}\\
This work was supported by the Laboratory Directed Research and Development program of Los Alamos National Laboratory, under project number 20240261ER, as well as the U.S. Department of Energy Office of Fusion Energy Sciences Base Theory Program, at Los Alamos National Laboratory under contract No. 89233218CNA000001. The computations have been performed using resources of the National Energy Research Scientific Computing Center (NERSC), a U.S. Department of Energy Office of Science User Facility operated under Contract No. DE-AC02-05CH11231. Further, we would like to thank Oliver Krzysik for his many helpful remarks. Los Alamos National Laboratory report number LA-UR-26-27307.
%
%
%
%
\appendix

\section{Mixed finite element stability for Lorentz force term} \label{app_2D_MHD}
To further study the choice of function space for the discrete velocity $\mathbf{V}_h$ given the Lorentz force term, we consider the case of tokamak domains and axisymmetric fields. In this case, the problem is reduced to a 2D one in the $(r,z)$-plane for cylindrical coordinates $(r, \phi, z)$. The velocity and magnetic vector fields can then be decomposed as
\begin{align}
&\mathbf{V} = (V_r, V_\phi, V_z)(r, \phi, z, t) \;\;\rightarrow\;\; (V_r, V_\phi, V_z)(r, z, t) = (V_r, V_z) + V_\phi \mathbf{e}_\phi, \\
&\mathbf{B} = (B_r, B_\phi, B_z)(r, \phi, z, t) \;\rightarrow\; (B_r, B_\phi, B_z)(r, z, t) = (B_r, B_z) + B_\phi \mathbf{e}_\phi,
\end{align}
for angular unit vector $\mathbf{e}_\phi$, and where $\mathbf{B}_p = (B_r, B_z)$, $\mathbf{V}_p = (V_r, V_z)$ are 2D vector fields, while $V_\phi$ and $B_\phi$ are scalar fields corresponding to out-of-plane vector components. The 3D $\mathbf{B}$-$\mathbf{V}$ coupling via the Lorentz force can be decomposed into three sets of couplings that relate the cylindrical $(r,z)$- and $\phi$-components of $\mathbf{B}$-$\mathbf{V}$ to each other. In particular, the $\mathbf{V}_p$-$B_\phi$ coupling can be shown to read
\begingroup
\addtolength{\jot}{2mm}
\begin{subequations}
\begin{align}
&\left\langle r n \mathbf{v} , \pp{\mathbf{V}_p}{t} \right\rangle + \left\langle\mathbf{v}, B_t \nabla (rB_t) \right\rangle = 0 &\forall \mathbf{v} \in \mathbb{V}_{V_p}, \\
& \left\langle r \chi, \pp{B_t}{t}\right\rangle - \left\langle \nabla (r \chi), B_t \mathbf{V}_p \right\rangle = 0 & \forall \chi \in \mathbb{V}_{B_t}, \label{2D_MHD_Bt}
\end{align}
\end{subequations}
\endgroup
for $L^2$-inner product $\langle.,.\rangle$, and where $\nabla = (\partial_r, \partial_z)$ and $\mathbb{V}_{V_p}$, $\mathbb{V}_{B_t}$ are suitable discrete function spaces. For 3D tokamak meshes in which each edge lies either in an $(r,z)$-plane or is parallel to $\mathbf{e}_\phi$ -- as is the case for the extruded meshes considered in this work -- the discrete space $Nc_k^e(\Omega)$ admits a tensor-product structure. In particular, it contains a 2D CG space $Q_k$ associated with the $\phi$-component of the vector fields in $Nc_k^e(\Omega)$ (see, e.g., \cite{mcrae2016automated} for details on tensor-product spaces). In other words, a consistent choice in the reduced 2D equation \eqref{2D_MHD_Bt} is to set $\mathbb{V}_{B_t} = Q_k$ for the scalar field $B_t$ corresponding to the 2D out-of-plane $\phi$-component of $\mathbf{B}$. Mixed finite element stability theory then suggests to set $\mathbb{V}_{V_t}$ to a curl-conforming finite element space, and by an analogous tensor-product argument, this in turn suggests setting the discrete 3D velocity space to $Nc_k^e$. Finally, we note that similar arguments are less clear for the two other couplings related to $\mathbf{V}_t$-$\mathbf{B}_p$ and $\mathbf{V}_p$-$\mathbf{B}_p$, as well as additional couplings in the full 3D case. In practice, the corresponding numerical modes then need to be taken care of using sufficiently strong physical diffusion or numerical stabilization terms.
\section{Structural properties of the spatial discretization} \label{app_structure}
Here, we briefly discuss some of the structural properties of the spatial discretization \eqref{3D_MHD_discr}. First, the discrete magnetic field equation \eqref{B_eqn_discr} conserves the discrete weak divergence $\delta_B \in Q_k(\Omega)$, defined according to
\begin{align}
&\left\langle \chi, \delta_B \right\rangle_\Omega = - \left\langle \nabla \chi, \mathbf{B} \right\rangle_\Omega + \int_{\partial\Omega} \chi g_1 \; dS & \forall \chi \in Q_k(\Omega), \label{def_weak_B_div}
\end{align}
where $g_1$ is defined along the domain's boundary $\partial \Omega$ as $\mathbf{B}_h \cdot \mathbf{n}|_{t=0}$. This can be shown by taking the time derivative of \eqref{def_weak_B_div} and noting that in the context of finite element exterior calculus, for $\chi \in Q_k(\Omega)$, we have $\nabla \chi \in Nc_k^e(\Omega)$. We can then substitute for the magnetic field equation \eqref{B_eqn_discr} by setting $\mathbf{\Sigma} = \nabla \chi$, and the result $\delta_B \equiv 0$ then follows using the discrete vector calculus identity $\nabla \times \nabla \chi \equiv 0$; for details, see \cite{wimmer2024structure}.

In periodic domains, the semi-discrete system \eqref{3D_MHD_discr} can further be shown to be consistent with respect to the discrete total energy
\begin{equation}
    H(n_h, \mathbf{V}_h, T_h, \mathbf{B}_h) = \int_\Omega \left( \frac{1}{2}n_h|\mathbf{V}_h|^2 + \frac{\beta n_h T_h}{\gamma - 1} + \frac{1}{2}|\mathbf{B}_h|^2 \right)\text{dx}.
\end{equation}
For this purpose, an analogous argument as used in our previous work \cite{wimmer2024structure} can be employed, using the chain rule in the time derivative and substituting for the discrete evolution equations \eqref{3D_MHD_discr}. Setting the test functions to $\chi = \beta T_h/(\gamma-1) + P_h \in Q_k$, $\mathbf{w} = \mathbf{V}_h \in Nc_k^e$\footnote{Note that this substitution works independently of the choice of discrete function space for $\mathbf{V}_h$, including the modified space \eqref{V_space_mod_0}.}, $\eta = \beta \in Q_k$, $\phi_h = Pe_\Delta \zeta_h \in dQ_{k-1}$, and $\mathbf{\Sigma}_h = \mathbf{B}_h \in Nc_k^e$, we find
\begingroup
\addtolength{\jot}{2mm}
\begin{align}
    \frac{dH}{dt} =&\left\langle \pp{H}{n_h}, \pp{n_h}{t} \right\rangle + \left\langle \dd{H}{\mathbf{V}_h}, \pp{\mathbf{V}_h}{t} \right\rangle + \left\langle \dd{H}{T_h}, \pp{T_h}{t} \right\rangle + \left\langle \dd{H}{\mathbf{B}_h}, \pp{\mathbf{B}_h}{t} \right\rangle \nonumber \\
    =&\left\langle \frac{\beta T_h}{\gamma - 1} + P_h , \pp{n_h}{t} \right\rangle + \left\langle n_h \mathbf{V}_h, \pp{\mathbf{V}_h}{t} \right\rangle + \left\langle \frac{n_h \beta}{\gamma - 1}, \pp{T_h}{t} \right\rangle + \left\langle \mathbf{B}_h, \pp{\mathbf{B}_h}{t} \right\rangle \label{H_subst} \\
    = &- \left\|\sqrt{Re^{-1}} \nabla \mathbf{V}\right\|_2^2 - \left\|\sqrt{S^{-1}} \nabla \times \mathbf{B}\right\|_2^2 \nonumber \\
    &- L_{\kappa_n}\left(\frac{\beta T_h}{\gamma - 1} + P_h, n_h\right)- L_{\kappa_V}(f(\mathbf{V}), f(\mathbf{V})) - L_{\kappa_B}(\mathbf{B}, \mathbf{B}), \label{H_final}
\end{align}
\endgroup
where going from \eqref{H_subst} to \eqref{H_final}, coupling terms between the evolution equations cancel and the curl-based velocity advection term vanishes; for details see \cite{wimmer2024structure}. Further, the temperature equation's interior penalty term vanishes since it contains a gradient of the test function, and we set $\eta = \beta$, which is a constant. Finally, the first two terms in \eqref{H_final} correspond to the physical dissipative viscous and resistive terms. Further, the interior penalty terms $L_{\kappa_V}$ and $L_{\kappa_B}$ are symmetric, and can be seen as acting as grid-scale kinetic and magnetic energy dissipation. Finally, the term
\begin{equation}
    - L_{\kappa_n}\left(\frac{\beta T_h}{\gamma - 1} + P_h, n_h\right),
\end{equation}
which arises from density stabilization (together with a weak boundary penalty in the case of non-periodic domains), is indefinite. As a consequence, the spatial discretization is energetically consistent only up to this contribution. On the other hand, in the absence of boundaries, it is still consistent with respect to global mass conservation. To see this, we note that $L_{\kappa_n}$ only contains a gradient of the test function in the interior penalty term, and therefore vanishes for constant test functions. Mass conservation then follows by choosing $\chi = 1$ in the continuity equation.
\section{Additional tokamak MHD simulation figures} \label{app_figures_MHD}
\begin{figure}[ht]
\begin{center}
\includegraphics[width=1.0\textwidth]{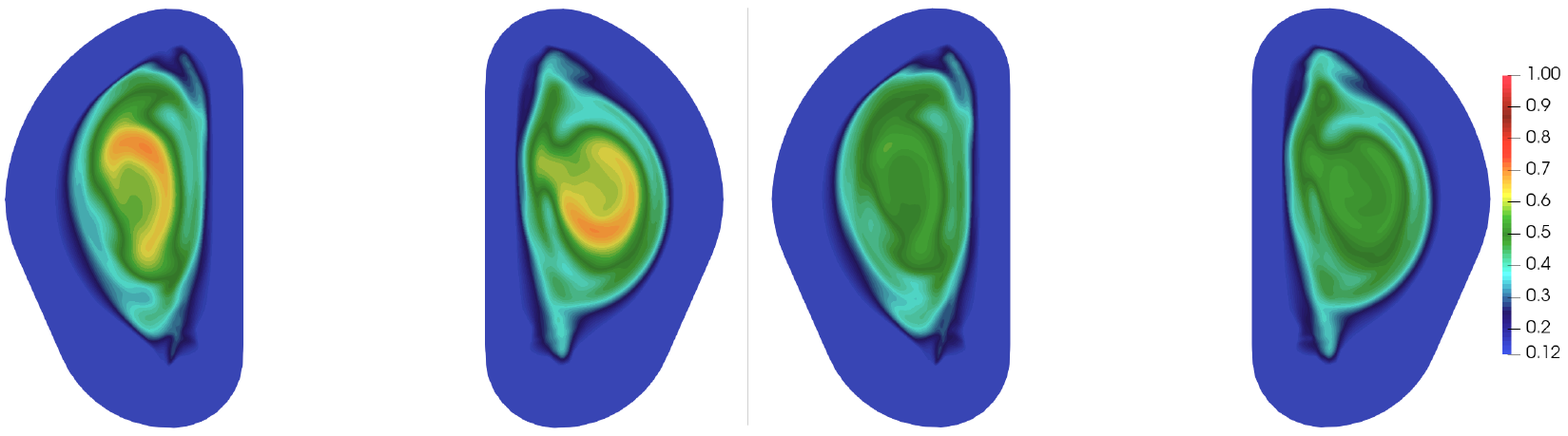}
\vspace{-5mm}
\caption{Additional images for kink run in Section \ref{sec_tokamak_long_runs}, depicting the temperature field at different times. Left: $\tau_A = 5000$. Right: $\tau_A = 6000$.}
\end{center}
\end{figure}
%
%
%
%
\bibliographystyle{abbrv}
\bibliography{paper_3D_MHD_solver.bib}
\end{document}